\newtheorem{theorem}{Theorem}[section]
\newtheorem{proposition}[theorem]{Proposition}
\newtheorem{lemma}[theorem]{Lemma}
\numberwithin{equation}{section}
\theoremstyle{definition}
\newtheorem{definition}[theorem]{Definition}
\newtheorem{remark}[theorem]{Remark}
\newcommand{\CC}{\mathbb{C}}
\newcommand{\FF}{\mathbb{F}}
\newcommand{\PP}{\mathbb{P}}
\newcommand{\ZZ}{\mathbb{Z}}
\newcommand{\sO}{\mathcal{O}}
\newcommand{\sE}{\mathcal{E}}
\renewcommand{\to}{\xymatrix@1@=15pt{\ar[r]&}}
\renewcommand{\rightarrow}{\xymatrix@1@=15pt{\ar[r]&}}
\renewcommand{\mapsto}{\xymatrix@1@=15pt{\ar@{|->}[r]&}}
\renewcommand{\twoheadrightarrow}{\xymatrix@1@=15pt{\ar@{->>}[r]&}}
\renewcommand{\hookrightarrow}{\xymatrix@1@=15pt{\ar@{^(->}[r]&}}
\newcommand{\congpf}{\xymatrix@1@=15pt{\ar[r]^-\sim&}}
\begin{document}

\newboolean{xlabels} %HC
\newcommand{\xlabel}[1]{ %HC
                        \label{#1} %HC
                        \ifthenelse{\boolean{xlabels}} %HC
                                   {\marginpar[\hfill{\tiny #1}]{{\tiny #1}}} %HC
                                   {} %HC
                       } %HC
%\setboolean{xlabels}{true} %HC
\setboolean{xlabels}{false} %HC

\title[Stable rationality of conic bundles and Prym curves]{On stable rationality of some conic bundles and moduli spaces of Prym curves}

\author[B\"ohning]{Christian B\"ohning}
\address{Christian B\"ohning, Mathematics Institute, University of Warwick\\
Coventry CV4 7AL, England}
\email{C.Boehning@warwick.ac.uk}

\author[Bothmer]{Hans-Christian Graf von Bothmer}
\address{Hans-Christian Graf von Bothmer, Fachbereich Mathematik der Universit\"at Hamburg\\
Bundesstra\ss e 55\\
20146 Hamburg, Germany}
\email{hans.christian.v.bothmer@uni-hamburg.de}

%\thanks{$^1$ Supported by Heisenberg-Stipendium BO 3699/1-2 of the DFG (German Research Foundation) during the initial stages of this work}
%\thanks{$^2$ Partially supported by the RTG 1670 of the  DFG (German Research Foundation)}

\begin{abstract}
We prove that a very general hypersurface of bidegree $(2, n)$ in $\PP^2\times \PP^2$ for $n$ bigger than or equal to $2$ is not stably rational, using Voisin's method of integral Chow-theoretic decompositions of the diagonal and their preservation under mild degenerations. At the same time, we also analyse possible ways to degenerate Prym curves, and the way how various loci inside the moduli space of stable Prym curves are nested. No deformation theory of stacks or sheaves of Azumaya algebras like in recent work of Hasset-Kresch-Tschinkel is used, rather we employ a more elementary and explicit approach via Koszul complexes, which is enough to treat this special case.
\end{abstract}

\maketitle

\section{Introduction, description of the problem and prerequisites}\xlabel{sIntroduction}

In this article we work over the complex numbers $\CC$ throughout. A hypersurface $H_{2,n} \subset \PP^2_{(x:y:z)} \times \PP^2_{(u:v:w)}$ of bidegree $(2,n)$ is given by an equation
\[
(x,y,z) A(u,v,w) (x,y,z)^t =0
\]
where $A(u,v,w)$ is a symmetric $3\times 3$ matrix with entries homogeneous of degree $n$ in $u,v,w$, hence projection of a very general $H_{2,n}$ to $\PP^2_{u,v,w}$ realizes it as a conic bundle over $\PP^2_{(u,v,w)}$ with discriminant curve $\Delta_{H_{2,n}}= \det A(u,v,w)$ of degree $3n$. This discriminant curve carries a natural Prym structure, i.e. a two-torsion line bundle $\alpha$, arising from the determinantal representation of $\Delta_{H_{2,n}}$. More precisely, $\alpha$ has a minimal graded free resolution
\begin{gather}\label{fResAlpha}
\xymatrix{
0 \ar[r] & \mathcal{O}_{\PP^2}(-2n)^3 \ar[r]^{ A} & \mathcal{O}_{\PP^2}(-n)^3 \ar[r] & \alpha \ar[r] & 0 .
}
\end{gather}
It is more accurate to think of the pair $(\Delta_{H_{2,n}}, \alpha )$ of the discriminant curve or discriminant datum of the conic bundle. Note already at this point that if $H_{2,n}$ is not very general, $\alpha$ need not necessarily be a bundle at all, but might, for example, be just a symmetric torsion-free sheaf. Then our main result is

\begin{theorem}\xlabel{tMain}
For $n\ge 2$, the very general hypersurface $H_{2,n} \subset \PP^2_{(x:y:z)} \times \PP^2_{(u:v:w)}$ of bidegree $(2,n)$ is not stably rational.
\end{theorem}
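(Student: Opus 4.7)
The approach is Voisin's specialization principle for integral decompositions of the diagonal: if $\mathcal{H} \longrightarrow B$ is a flat family of projective varieties over a smooth curve germ, whose very general fiber admits an integral Chow-theoretic decomposition of the diagonal, and whose special fiber $H_0$ admits a resolution $\widetilde{H}_0 \longrightarrow H_0$ that is universally $CH_0$-trivial, then $\widetilde{H}_0$ also admits such a decomposition. Since stably rational varieties admit integral decompositions, it is enough, for each $n \ge 2$, to exhibit a specific member $H_0 = H_{2,n}^0$ of the linear system of bidegree $(2,n)$ hypersurfaces whose resolution $\widetilde{H}_0$ is universally $CH_0$-trivial over $H_0$ yet has nontrivial unramified cohomology, hence is not stably rational.

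The candidate degeneration I would use uniformly for all $n \ge 2$ is the specialization of $A(u,v,w)$ to the symmetric matrix $A_0$ with zero diagonal and generic degree-$n$ forms $f, g, h$ as off-diagonal entries, so that $H_0 = \{xy f + xz g + yz h = 0\}$ and $\det A_0 = 2 fgh$. The discriminant then degenerates to the reducible curve $\Delta_0 = \{f=0\} \cup \{g=0\} \cup \{h=0\}$, a union of three smooth curves of degree $n$ meeting pairwise transversally in $3n^2$ nodes, with no three components sharing a common point for generic $f, g, h$. For this specific $A_0$ the Koszul resolution (\ref{fResAlpha}) becomes completely explicit: the Prym sheaf $\alpha_0$, its restriction to each component of $\Delta_0$ (computable from the adjugate of $A_0$), and the gluings at the nodes are all under direct control. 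The pair $(\Delta_0, \alpha_0)$ is of Artin-Mumford type and carries a nontrivial class in $H^2_{nr}(\CC(H_0)/\CC, \ZZ/2)$ through the classical residue computation for conic bundles over $\PP^2$.

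Next I would analyze the singularities of $H_0 \subset \PP^2 \times \PP^2$ and construct the resolution. Setting $F = xyf + xzg + yzh$, above a smooth point of $\Delta_0$ the conic fiber is a pair of lines but some $\partial F / \partial u_i$ is nonzero, so $H_0$ is smooth there. Above each node, say a point where $f = g = 0$ and $h \ne 0$, the fiber is a pair of lines meeting at $(1:0:0)$, and a direct Hessian computation in local coordinates (using that $f, g$ cut out transverse smooth branches and $h$ is a unit) shows that $H_0$ has an ordinary double point at that preimage. Blowing up all $3n^2$ such nodes yields a resolution $\widetilde{H}_0 \longrightarrow H_0$ whose exceptional divisors are smooth quadrics $\PP^1 \times \PP^1$, and these are rational, hence universally $CH_0$-trivial; the Colliot-Th\'el\`ene-Pirutka criterion then gives universal $CH_0$-triviality of $\widetilde{H}_0 \longrightarrow H_0$. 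Combined with the nonvanishing unramified class from the previous step (a birational invariant, hence preserved by the resolution), Voisin's theorem concludes that the very general $H_{2,n}$ is not stably rational.

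The main technical obstacle is balancing the mildness of the singularities of $H_0$ against the degeneracy of $(\Delta_0, \alpha_0)$ needed to force a nonzero ramification class. Concretely, one must verify that for the chosen $A_0$ no worse-than-ODP singularities appear on $H_0$ (which in turn requires a genericity analysis of $f, g, h$ ruling out common zeros and tangencies of pairs of components), that each of the $3n^2$ ODPs is resolved by a single blow-up with a rational exceptional quadric, and that the Koszul-theoretic description of $\alpha_0$ on the reducible curve $\Delta_0$ indeed yields a nonzero unramified class at the nodes of each pair of components. The Koszul approach of the paper is precisely what replaces Azumaya-algebra and stack deformation theory of Hassett-Kresch-Tschinkel here: it reduces the residue computation at each node to an explicit linear-algebra check on $A_0$, which is what makes the higher-degree case (three degree-$n$ components instead of three lines) tractable.
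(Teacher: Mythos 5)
There is a genuine and fatal gap in the choice of degeneration. Your special fiber $H_0=\{xyf+xzg+yzh=0\}$ is in fact a \emph{rational} variety: the conic $xyf+xzg+yzh=0$ over $\CC(u,v,w)$ contains the constant point $(x:y:z)=(1:0:0)$ (indeed all three coordinate points), so the conic bundle has a rational section and $H_0$ is birational to $\PP^1\times\PP^2$. Equivalently, over the component $\{f=0\}$ of the discriminant the equation factors globally as $z\cdot(xg+yh)=0$, so the double cover $\tilde\Delta_1$ of that component splits and the two-torsion sheaf $\alpha_0$ restricts \emph{trivially} to every component $\{f=0\},\{g=0\},\{h=0\}$. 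This violates precisely the hypothesis of the Artin--Mumford criterion (Theorem \ref{tArtinMumford}) that $\alpha$ be nontrivial on each component; the residues of the quaternion class vanish on all components, the class is unramified, hence zero since $\mathrm{Br}(\PP^2)=0$, and $H^2_{nr}$ of $H_0$ is trivial. Your assertion that ``$(\Delta_0,\alpha_0)$ is of Artin--Mumford type and carries a nontrivial class'' is exactly the step that fails, and no amount of genericity of $f,g,h$ or care with the ODP resolution can repair it. This is the subtlety the paper flags as Step 1: the closure of the locus of Prym curves with resolution of type \eqref{fResAlpha} need not contain \emph{any} good Prym curve, and naive determinantal degenerations (zero diagonal, block-diagonal, products of lower-degree matrices, etc.) systematically kill $\alpha$ on some component.

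The paper's actual proof is organized around overcoming exactly this obstruction. For $n=2$ it produces, by a randomized search over $\FF_{23}$ followed by a lifting argument to characteristic $0$ (Proposition \ref{pDetDeg}, Lemma \ref{lLifting}), an explicit symmetric $3\times 3$ matrix of quadrics whose determinant is a union of two smooth cubics meeting transversally, with corank $1$ everywhere and $\alpha$ nontrivial on both components; for general even $n=2m$ it abandons resolutions of type \eqref{fResAlpha} for the central fiber and instead degenerates to Artin--Mumford type Prym curves with a $2\times 2$ presentation matrix, realizing the degeneration of conic bundles via an explicit Koszul-complex construction on $\PP^2\times\PP^1$ (Theorem \ref{tSexticDegeneration}) and then pulling back under $(u:v:w)\mapsto(u^m:v^m:w^m)$ (Proposition \ref{pGoodPryms}); and for arbitrary $n$ it runs an induction degenerating $H_{2,n+1}$ to $H_{2,n}\cup(\PP^2\times h)$ and invoking the Colliot-Th\'el\`ene--Totaro lemma (Lemma \ref{lCT-T}) for reducible special fibers. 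To salvage your argument you would need, at minimum, to replace your $A_0$ by a degeneration for which the nontriviality of $\alpha_0$ on every component is actually verified --- and that verification is the substance of the theorem.
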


We now describe the strategy of the proof and some subtle problems which arise in its implementation, the resolution of which can be said to be one of the main contributions of this article (the other one being the usage of a more explicit and low-tech type of deformation theory based on the Koszul complex, which avoids the arguments involving root stacks and sheaves of Azumaya algebras in \cite{HKT15} completely; we will say more about this below). These problems are related to the fact that the discriminant curves are not the generic plane Prym curves of degree $3n$ as soon as $n >2$.

\medskip

In \cite{Voi15} Voisin introduced a very powerful new degeneration technique that allows one to prove that very general members of certain families of ``nearly rational" (e.g. unirational) varieties are not stably rational; the idea is that stably rational varieties have an integral Chow-theoretic decomposition of the diagonal resp. universally trivial Chow group of zero cycles, and this property is preserved under mild degenerations. The technique was developed further, generalised substantially and cast in its natural theoretical framework in \cite{CT-P16}. This made possible a wealth of applications, some of them using degenerations in unequal characteristic such as \cite{To16}; without any pretense to completeness we just mention as examples \cite{CT-P15}, \cite{To16}, \cite{HKT15}, \cite{HT16}, \cite{HPT16a}, \cite{Pi16} and \cite{HPT16}. In this last article the authors exhibit a family of smooth varieties over a connected base some of whose fibers are rational whereas others are irrational. The existence of an integral Chow-theoretic decomposition of the diagonal is the only (stably) birational invariant so far that has been used to distinguish birational types of smoothly deformation-equivalent smooth varieties.

We have taken the formulation of the following result that encapsulates the method from \cite{Beau15}, but the result is really a simplified version of \cite[Thm.  1.14]{CT-P16}, and the reader is referred to the latter source for a proof.

\begin{theorem}\xlabel{tVoisin}
Let $B$ be a smooth variety and $o\in B$ a (closed) point. Suppose that $f\colon \mathcal{X} \to B$ is a flat projective morphism such that the generic fiber of $f$ is smooth and that the fiber $X:= \mathcal{X}_0$ is integral. Suppose $X$ admits a resolution of singularities $\sigma \colon \tilde{X} \to X$ with the following properties:
\begin{enumerate}
\item
The torsion subgroup of $H^3 (\tilde{X}, \ZZ )$ is nontrivial.
\item
The fiber of $\sigma$ over any scheme-theoretic point $\xi \in X$ is a smooth rational variety over the residue field $\kappa (\xi )$.
\end{enumerate}
Then for a very general point $b\in B$, the fiber $\mathcal{X}_b$ is not stably rational.
\end{theorem}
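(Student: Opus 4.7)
The plan is to argue by contradiction, following the obstruction-theoretic method of Voisin and Colliot-Th\'el\`ene--Pirutka. Assume that $\mathcal X_b$ is stably rational for a very general $b\in B$. A classical Bloch-Srinivas-type equivalence says that a smooth projective variety $Y$ over a field $k$ admits an integral Chow-theoretic decomposition of the diagonal
\[
[\Delta_Y] = [Y\times\{y\}] + [Z] \in CH^{\dim Y}(Y\times Y), \qquad Z\subset D\times Y,\ D\subsetneq Y,
\]
if and only if $Y$ is universally $CH_0$-trivial, and this holds whenever $Y$ is stably rational. Since ``very general'' refers to the complement of a countable union of proper subvarieties of $B$, a standard spreading-out argument using Hilbert schemes of relative cycles on $\mathcal X\times_B\mathcal X$ produces such an integral decomposition on the geometric generic fiber of $f$.

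The central step is to specialize this generic decomposition to the singular fiber $X$ and then lift it to the resolution $\tilde X$. After shrinking $B$ to a smooth curve through $o$, the cycles appearing in the generic decomposition can be closed up to relative cycles on $\mathcal X\times_B\mathcal X$ and specialized to $X\times X$ via refined Gysin pullback. This yields a decomposition of $[\Delta_X]$ valid modulo cycles supported on $X_{\mathrm{sing}}\times X$. Pulling back along $(\sigma,\sigma)\colon \tilde X\times\tilde X\longrightarrow X\times X$ produces error terms supported on $\sigma^{-1}(X_{\mathrm{sing}})\times\tilde X$. Hypothesis (2) is precisely the statement that $\sigma$ is a universally $CH_0$-trivial morphism: for every extension field $K\supset\CC$ the pushforward $\sigma_*$ is an isomorphism $CH_0(\tilde X_K)\cong CH_0(X_K)$. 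This universality allows the exceptional error cycles to be absorbed into the $D\times\tilde X$ component and produces a genuine integral Chow-decomposition of the diagonal of $\tilde X$, whence $\tilde X$ is itself universally $CH_0$-trivial.

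To conclude, one contradicts hypothesis (1). For a smooth projective complex variety, the existence of an integral decomposition of $[\Delta_{\tilde X}]$ means that the diagonal correspondence, acting as the identity on $H^3(\tilde X,\ZZ)_{\mathrm{tors}}$, splits as the sum of a correspondence factoring through a point and a correspondence supported on a proper closed subvariety. By the Bloch-Srinivas argument, extended to integral coefficients by Colliot-Th\'el\`ene--Voisin, both contributions annihilate $H^3_{\mathrm{tors}}$; equivalently the transcendental Brauer group and the unramified cohomology $H^2_{\mathrm{nr}}(\CC(\tilde X),\QQ/\ZZ)$ vanish. This forces $H^3(\tilde X,\ZZ)_{\mathrm{tors}}=0$, contradicting (1). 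The principal technical obstacle is the specialization-and-lifting step: one must verify that specialization on Chow groups respects the shape of the decomposition, and, crucially, that hypothesis (2) lets one eliminate the exceptional contributions with \emph{integral} rather than merely rational coefficients. This integrality is the precise reason why (2) is formulated as ``smooth rational fibers over every residue field'' rather than as mere rational connectedness, and is the heart of the sharper specialization theorem in \cite{CT-P16} to which the statement defers.
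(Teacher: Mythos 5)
The paper does not prove this statement at all: it is quoted as a known result, with the proof explicitly delegated to \cite[Thm.~1.14]{CT-P16} (in the formulation of \cite{Beau15}), so there is no in-paper argument to compare yours against. Your sketch is, in outline, the correct Voisin/Colliot-Th\'el\`ene--Pirutka argument: countability of the relevant Hilbert/Chow loci to pass from ``very general fiber stably rational'' to a decomposition on the geometric generic fiber, specialization of the decomposition of the diagonal to the central fiber, transfer to the resolution using that hypothesis (2) makes $\sigma$ a universally $CH_0$-trivial morphism, and finally the Artin--Mumford/Bloch--Srinivas step showing that an integral decomposition of $\Delta_{\tilde X}$ kills $H^3(\tilde X,\ZZ)_{\mathrm{tors}}$ (correspondences supported on $D\times\tilde X$ act through $H^1$ of a desingularization of $D$, which is torsion-free). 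One imprecision worth fixing: the specialization step itself produces an honest decomposition $[\Delta_X]=[X\times\{x_0\}]+[Z_0]$ with $Z_0$ supported on $D_0\times X$ for a \emph{proper} closed $D_0\subsetneq X$ --- this is exactly where the integrality of $X$ is used, and there are no error terms supported on $X_{\mathrm{sing}}\times X$ at that stage; the exceptional error cycles appear only afterwards, when comparing the pullback of $\Delta_X$ under $(\sigma,\sigma)$ with $\Delta_{\tilde X}$, and it is there that (2) is invoked. Alternatively, as in \cite{CT-P16}, one can avoid manipulating cycles on the singular product $X\times X$ altogether by phrasing everything in terms of universal triviality of $CH_0$ over field extensions and Fulton's specialization map, which is the cleaner route the cited source takes.
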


Here condition (1) can be replaced by any other condition that ensures that the Chow group of zero cycles is not universally trivial for $X=\mathcal{X}_0$; the condition means that the unramified Brauer group of $X$ is nonzero. 

To apply Theorem \ref{tVoisin} to show that certain very general members of families of conic bundles are not stably rational, one would like to construct a degeneration as conic bundles (so $X=\tilde{X}_0$ should retain a conic bundle structure), and then one needs a theorem that tells one when a conic bundle $X$ has a desingularization $\sigma \colon \tilde{X} \to X$ such that  (1) and (2) in Theorem \ref{tVoisin} are satisfied. Artin and Mumford \cite{AM72}, Proposition 3, have given such a criterion.

\begin{theorem}\xlabel{tArtinMumford}
Suppose $\pi \colon X \to S$ is a conic bundle over a smooth rational surface $S$, i.e. there exists a vector bundle $\mathcal{E}$ of rank $3$ on $S$, an integer $k$ and a quadratic form $q\in H^0 (S, \mathrm{Sym}^2 \mathcal{E} (k))$ such that $X$ is the zero scheme of $q$ in the associated projective bundle $\PP (\mathcal{E})$ over $S$, and $q$ is generically nondegenerate. Suppose moreover that $q$ is of corank $1$ everywhere above the curve $\Delta \subset S$ where it is not nondegenerate (so the fibers over the discriminant curve are two distinct lines everywhere), and that $\Delta$ consists of more than one smooth components, and these components $\Delta_i$ meet transversally. Over $\Delta$ we have a natural double cover $\tilde{\Delta} \to \Delta$, given by the subset in the Grassmannian of lines in the fibers of $\PP (\mathcal{E})$ consisting of lines contained in $X$. Suppose that $\tilde{\Delta}$ gives a nontrivial \'{e}tale double cover when restricted to any component $\Delta_i$ (equivalently, the two-torsion line bundle $\alpha$ determined on $\Delta$ by $\tilde{\Delta}$ restricts nontrivially to each component $\Delta_i$). 
Then $X$ has only ordinary double point singularities lying in fibers above points of $\Delta$ where two components meet, hence has a desingularization $\sigma \colon \tilde{X} \to X$ of the type required in (2) of Theorem \ref{tVoisin} above. Moreover, (1) of that Theorem holds for $\tilde{X}$.
\end{theorem}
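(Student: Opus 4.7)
My plan breaks into three steps: (i) classify the singularities of $X$ by local normal forms, (ii) resolve them and check the fibre condition of Theorem \ref{tVoisin}, and (iii) produce a nontrivial torsion class in $H^3(\tilde X, \ZZ)$ following Artin--Mumford. The third step is the main obstacle.

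For (i), I would work \'etale-locally on $S$. Near a smooth point of $\Delta$ with local equation $t$, the corank-$1$ hypothesis lets one bring the quadratic form on $\mathcal{E}$ to the normal form $\mathrm{diag}(1,1,t)$, so $X$ is locally $\{x^2 + y^2 + tz^2 = 0\}$, smooth by a direct Jacobian check. At a transverse crossing of two components with local equations $s, t$, the corank-$1$ condition at the node together with transversality of $\Delta_1,\Delta_2$ forces the form $\mathrm{diag}(1,1,st)$ (by splitting off the invertible $2\times 2$ block and then using $\det A = st$), so $X$ is locally the standard $3$-fold ordinary double point $x^2+y^2+st=0$. Hence the singular locus of $X$ consists of isolated ODPs lying above the nodes of $\Delta$. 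For (ii), I would blow up each ODP once: the exceptional divisor is a smooth quadric surface $\PP^1 \times \PP^1$, and $\sigma$ is an isomorphism elsewhere, so every scheme-theoretic fibre of $\sigma$ is either a point or a smooth quadric, both smooth rational over the residue field. This gives condition (2) of Theorem \ref{tVoisin}.

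For (iii), following Artin--Mumford, I would construct a nonzero $2$-torsion class $\gamma \in \mathrm{Br}(\tilde X) \cong H^3(\tilde X, \ZZ)_{\mathrm{tors}}$ out of the two-torsion bundle $\alpha$ on $\Delta$ (equivalently, out of $\tilde\Delta \to \Delta$) via a Gysin-type push-forward from $H^1(\Delta, \mu_2)$ compatible with the conic-bundle structure. Well-definedness and unramifiedness of $\gamma$ reduce to residue computations on the two local models above, the transverse-crossing model being the delicate one: one has to check that the residues of $\gamma$ along the two strict transforms of $\pi^{-1}(\Delta_i)$ meeting a given exceptional quadric $\PP^1\times\PP^1$ cancel, which uses both transversality and the corank-$1$ condition at the node. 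The genuinely hard part is the nontriviality of $\gamma$, which uses in full the hypotheses that $\alpha|_{\Delta_i}$ is nonzero for every component $\Delta_i$ and that there are at least two such components, in order to rule out accidental cancellation of the contributions in the Artin--Mumford cohomology exact sequence. For the detailed verification I would simply invoke \cite{AM72}, Proposition 3, which is exactly this assertion.
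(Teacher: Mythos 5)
Your proposal ultimately rests on citing \cite{AM72}, Proposition 3, which is exactly what the paper does: Theorem \ref{tArtinMumford} is stated there as a quoted result of Artin and Mumford with no independent proof given. Your preliminary sketch (local normal forms $x^2+y^2+tz^2$ and $x^2+y^2+stz^2$, the $\PP^1\times\PP^1$ exceptional quadrics over the ordinary double points, and the two-torsion Brauer class built from $\alpha$) is an accurate outline of the Artin--Mumford argument, so the approach is essentially the same as the paper's.
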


Hence our strategy for proving Theorem \ref{tMain} is clear: we have to find an appropriate degeneration of the conic bundles given by our hypersurfaces $H_{2,n}$ to apply Theorem \ref{tArtinMumford}. Usually, this task is broken up into two steps:

\begin{itemize}
\item[Step 1.]
Prove that there is a degeneration of Prym curves 
\[
(\Delta_{H_{2,n, t}}, \alpha_t ) \longrightarrow (\Delta , \alpha ), \quad t\in B,\; t\to o\in B
\]
such that $(\Delta, \alpha)$ is of the type required for application of Theorem \ref{tArtinMumford}. 
\item[Step 2.]
Prove that there is a family of conic bundles $f \colon \tilde{X} \to B$ (to emphasize the conic bundle structure we might also write this as $\mathcal{C} \to \PP^2 \times B$) whose discriminant data realize the degeneration of Prym curves of Step 1, and such that  Theorem \ref{tVoisin} is applicable.
\end{itemize}

In \cite{HKT15} (with a slight generalization in \cite{HT16}) Hassett, Kresch and Tschinkel give a solution to Step 2 in the following way.

\begin{theorem}\xlabel{tHKT}
Let S a be a smooth projective rational surface, and let $\mathcal{P}$ be an irreducible variety parametrizing pairs $(C, \alpha )$ where
\begin{enumerate}
\item
the curves $C$ belong to some linear system of effective divisors on $S$, are smooth and irreducible for a generic point in $\mathcal{P}$ and reduced nodal in general;
\item
$\alpha$ is a $2$-torsion line bundle on $C$ which is nontrivial over each irreducible component of $C$.
\end{enumerate}
If then $\mathcal{P}$ contains a point $p_0=(C_0, \alpha_0)$ with $C_0$ a reducible curve with smooth irreducible components, then the very general conic bundle $\mathcal{C}_p \to S$ constructed from a point $p=(C, \alpha )$ in $\mathcal{P}$ is not stably rational.
\end{theorem}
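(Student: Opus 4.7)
The plan is to apply Voisin's degeneration theorem (Theorem \ref{tVoisin}) to a flat projective family of conic bundles over $S$ whose generic fiber is the conic bundle $\mathcal{C}_p$ associated to a very general $p \in \mathcal{P}$ and whose special fiber is a conic bundle $X_0 \to S$ with discriminant datum $(C_0,\alpha_0)$; Theorem \ref{tArtinMumford} will supply the required properties of the special fiber.

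First I would construct, for the distinguished point $p_0 = (C_0,\alpha_0)$, a specific conic bundle $X_0 \to S$ realising $(C_0,\alpha_0)$ as its discriminant datum. Since $C_0$ is a reducible nodal curve with smooth irreducible components meeting transversally, and $\alpha_0$ restricts nontrivially to each component, the hypotheses of Theorem \ref{tArtinMumford} are satisfied: $X_0$ has at worst ordinary double points lying above the nodes of $C_0$, it admits a resolution $\sigma \colon \tilde{X}_0 \to X_0$ with smooth rational fibers, and $H^3(\tilde{X}_0, \ZZ)$ contains nontrivial torsion. This gives exactly the type of degenerate fiber required by Theorem \ref{tVoisin}.

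The main obstacle is the second step: producing a flat projective family $\mathcal{X} \to B$ over a smooth base $B$ equipped with a morphism $B \to \mathcal{P}$ sending a point $o \in B$ to $p_0$ and a very general point to a very general $p \in \mathcal{P}$, so that $\mathcal{X}_o = X_0$ and the generic fiber is $\mathcal{C}_p$. The difficulty is that passing from a Prym pair $(C,\alpha)$ to an actual conic bundle over $S$ is not a canonical operation: one must lift $\alpha$ to a sheaf of even Clifford algebras, equivalently a degree-$2$ Azumaya algebra on the $\mu_2$-root stack built from $(S, C, \alpha)$. The technical core of \cite{HKT15} is to show that the Azumaya algebra witnessing $X_0$ over the root stack at $p_0$ deforms over the root stacks associated to nearby Prym pairs, so that the conic bundle extends throughout a Zariski neighbourhood of $p_0$ in $\mathcal{P}$. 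This unobstructedness argument for sheaves of Azumaya algebras on stacks is precisely what the present paper circumvents via an explicit Koszul-complex construction in the special case of bidegree $(2,n)$ hypersurfaces.

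With the family $\mathcal{X} \to B$ in hand, the hypotheses of Theorem \ref{tVoisin} are then immediate: flatness and integrality of $X_0$ are built into the construction, smoothness of the generic fiber follows from smoothness of $(C,\alpha)$ for a very general $p \in \mathcal{P}$ together with the generic nondegeneracy of the quadratic form, and the resolution properties of $\tilde{X}_0$ were already established in the first step. Voisin's theorem then yields that the very general fiber $\mathcal{X}_p \simeq \mathcal{C}_p$ is not stably rational.
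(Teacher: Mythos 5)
This theorem is not proved in the paper at all: it is quoted verbatim from \cite{HKT15} (with the generalization in \cite{HT16}), and the paper explicitly declines to use it, replacing the relevant step in its special situation by the explicit Koszul-complex construction of Section \ref{sGoodDegPrym}. So there is no internal proof to compare your attempt against, only the citation.

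Your outline is a faithful account of how Hassett--Kresch--Tschinkel actually argue: realise $(C_0,\alpha_0)$ by a conic bundle $X_0\to S$ to which Theorem \ref{tArtinMumford} applies, fit $X_0$ into a flat family of conic bundles whose discriminant data sweep out a neighbourhood of $p_0$ in $\mathcal{P}$, and conclude with Theorem \ref{tVoisin}. The first and last paragraphs of your proposal are unobjectionable. The genuine gap is exactly where you locate it yourself: the second step. Producing the family $\mathcal{X}\to B$ --- equivalently, showing that the sheaf of even Clifford (Azumaya) algebras on the $\mu_2$-root stack that witnesses $X_0$ deforms along with the Prym datum, unobstructedly, over a neighbourhood of $p_0$ --- is the entire content of the theorem, and writing ``the technical core of \cite{HKT15} is to show\dots'' names the missing argument rather than supplying it. As it stands your text is a proof strategy plus a citation, not a proof; to make it self-contained you would have to carry out the deformation theory of Azumaya algebras on tame Deligne--Mumford stacks (as in \cite{HKT15}), or, in the restricted setting of this paper, substitute the explicit resolution-theoretic deformation of Proposition \ref{pDeformationPhi} and Theorem \ref{tSexticDegeneration}, which is precisely the route the authors take in order to avoid Theorem \ref{tHKT} altogether. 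Two smaller points to watch if you do complete the argument: Theorem \ref{tVoisin} needs the special fiber to be integral (true here because the total space of a conic bundle over $S$ with reduced discriminant is irreducible, but it should be said), and $\mathcal{C}_p$ is only defined up to birational equivalence by the Artin--Mumford construction, so one must fix a projective model before speaking of a flat projective family.
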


Here $\mathcal{C}_p$ is defined up to birational isomorphism by the construction in \cite{AM72}, Theorem 1.

This, however, does not achieve Step 1 above at all (finding the appropriate degenerations of discriminant data tends to be the hardest part in many applications), and moreover, to prove Theorem \ref{tHKT}, the authors make use of deformation theory of tame Deligne-Mumford stacks, root stacks and sheaves of Azumaya algebras on them, which is very technically involved.

What this article accomplishes is the following:

\begin{itemize}
\item[(a)]
We find appropriate degenerations of Prym curves for Step 1 in for $n=2m$ even, and prove Theorem \ref{tMain} for even $n$ in this way. For the case of general $n$ we use a result due to Colliot-Th\'{e}l\`{e}ne and Totaro \cite[Lemma 2.4]{To16}. The latter possibility and the method of proof was kindly communicated to us by Zhiyu Tian after the first version of this article appeared online; the method is also used by Zhi Jiang, Zhiyu Tian, and Letao Zhang in a forthcoming work. The first version only proved Theorem \ref{tMain} for even $n$.
\item[(b)]
We do not use the results of \cite{HKT15} at all, but rather replace the deformation theory of stacks and sheaves of Azumaya algebras by a construction involving the Koszul complex; this is much easier and more concrete in the particular case we are interested in. It could also be used to investigate stratifications of the Prym moduli space and degenerations of Prym curves more systematically in future.
\item[(c)]
We construct several examples of reducible Prym curves such that Theorem \ref{tArtinMumford} is applicable to imply that the associated conic bundles are not stably rational.
\end{itemize}

It should be said that in \cite{HT16}, p. 16 bottom, the authors point out, without addressing the details, that the case $n=2$ of Theorem \ref{tMain} can be obtained as a corollary to their general theory of deformations of root stacks and sheaves of Azumaya algebras, but to obtain the statement for any $n$, it is necessary to resort to the constructions in the present article, and treat the case $n=2$ more explicitly as a starting point, too.

We add a few more words about compactifications of Prym moduli spaces to make clear why Step 1 above is nontrivial. The moduli space $\mathcal{R}_g$ of pairs $(C, \alpha)$ where $C$ is a smooth projective genus $g$ curve and $\alpha$ a two-torsion line bundle on $C$ admits a compactification $\bar{\mathcal{R}}_g$ which is compatible with the Deligne-Mumford compactification of the moduli space of curves of genus $g$ under the forgetful map; i.e. it extends to a morphism $\bar{\mathcal{R}}_g \to \bar{\mathcal{M}}_g$. See \cite{Beau77}, \cite{BCF04} or \cite{Fa12} for this. Recall that a curve is called (semi-)stable if it is a reduced connected one-dimensional schemes with at most ordinary double points and of arithmetic genus $g$ such that every smooth rational component $E$ meets the other components in $\ge 3$ (resp. $\ge 2$ for semi-stable curves) points. Then $\bar{\mathcal{M}}_g$ contains stable curves and is compact. Every one-parameter family of smooth curves has a limit in $\bar{\mathcal{M}}_g$, though one has to first perform a semi-stable reduction to see this (e.g. a family of plane cubics specializing to a cuspidal curve has a curve with an elliptic tail as limit, after several blow-ups in the central fiber and finite covers of the base).

Now by \cite{BCF04}, Definition 1, points in $\bar{\mathcal{R}}_g$ parametrize the following objects.

\begin{definition}\xlabel{dBallico}
A component $E$ of a Deligne-Mumford semi-stable curve $C$ is called \emph{exceptional} if it is smooth, rational and meets the other components in exactly two points. One calls $C$ \emph{quasi-stable} if every two exceptional components are disjoint. The stable model $\mathrm{st}(C)$ is the stable curve obtained from $C$ by contracting all exceptional components. 

A (semi-stable) Prym curve $C$ of genus $g$ is a triple $(C, \eta , \beta )$ where $C$ is a quasi-stable curve of genus $g$, $\eta$ is a line bundle on $C$ with a sheaf homomorphism $\beta \colon \eta^{\otimes 2} \to \mathcal{O}_C$ such that
\begin{enumerate}
\item
$\eta$ has total degree $0$ on $C$ and degree $1$ on every exceptional component;
\item
$\beta$ is non-zero at the general point of every non-exceptional component.
\end{enumerate}
\end{definition}

Equivalently, this means that $\beta$ vanishes identically on all exceptional components $E_i$ of $C$, and denoting by $\tilde{C}$ the union of the non-exceptional components
\[
\eta\mid_{\tilde{C}} \simeq \mathcal{O}_{\tilde{C}}(-q_1^1-q_1^2- \dots - q_r^1 -q_r^2)
\]
where $\tilde{C} \cap E_i = \{ q_i^1, q_i^2 \}$. Moreover, $\eta\mid_{E_i} = \mathcal{O}_{\PP^1}(1)$, and the map $\bar{\mathcal{R}}_g \to \bar{\mathcal{M}}_g$ is given by associating $\mathrm{st}(C)$ to the triple $(C, \eta , \beta )$. 

\medskip

Jarvis \cite{Jar98} has given an equivalent description of this compactification in terms of (square) root sheaves of $\mathcal{O}_{\mathrm{st}(C)}$ (certain rank one torsion free coherent sheaves on  $\mathrm{st}(C)$).

We give a name to the types of Prym curves that we allow as our degenerations in Step 1 above (they are those for which Theorem \ref{tArtinMumford} ensures a nonvanishing Brauer obstruction for the associated conic bundle).

\begin{definition}\xlabel{dGoodPrym}
We will call a (stable) plane Prym curve $(C, \alpha )$ \emph{good} if the following hold
\begin{enumerate}
\item
$C$ is reducible, with smooth irreducible components $C_i$, $i\in I$, and nodal.
\item
The torsion-free symmetric rank $1$ root sheaf $\alpha$ on $C$ is in fact a two torsion line bundle, and restricts to a nontrivial two torsion line bundle on each component $C_i$.
\end{enumerate}
\end{definition}

Now we can say more precisely what the subtlety of Step 1 above consists in: if we look at the closure of the locus of smooth plane Prym curves $(C, \alpha)$ such that the two-torsion line bundle $\alpha$ has a minimal graded free resolution of type \ref{fResAlpha} inside the respective $\bar{\mathcal{R}}_g$, then this closure need not contain any good Prym curves at all! 

\medskip

The subsequent sections are organized as follows: in Section \ref{sDetDeg} we prove Theorem \ref{tMain} for $n=2$ using determinantal degenerations. For this it is necessary to find a symmetric $3\times 3$ matrix with quadratic forms on $\PP^2$ as entries whose determinant defines a union of two smooth cubic plane curves intersecting transversally, and such that the corank of the matrix is precisely $1$ in each point of the two curves.
It is not at all clear how to produce such a matrix ``by hand" and ad hoc attempts to write one down fail. 

In Section \ref{sGoodDegPrym} we prove Theorem \ref{tMain} for any even $n=2m$ then. We do not use determinantal degenerations, but rather degenerate to certain Prym curves that have minimal graded free resolutions of a type first studied in \cite{AM72} where the entries of the presentation matrix have different degrees and the matrix itself is $2\times 2$ instead of $3\times 3$. Using a construction involving a bi-graded Koszul complex on $\PP^2 \times \PP^1$ we solve the problems mentioned in Steps 1 and 2 above at the same time for the case $n=2$, and then employ a combination of a trick first appearing in \cite{Beau00} and a generalization of a geometric construction of good Prym curves in \cite{AM72} to settle the general case $n=2m$ in Theorem \ref{tMain}. We believe that some of the techniques in this section could also be of independent interest, e.g., to construct standard conic bundles associated to Prym curves explicitly, or to study adjacency relations in compactified moduli spaces of Prym curves.

In Section \ref{sGeneralTheorem} we finally give the proof of Theorem \ref{tMain} in full, for any $n$ independent of the parity; the proof was communicated to us by Zhiyu Tian (the method is also used in forthcoming work of Zhi Jiang, Zhiyu Tian, and Letao Zhang) and reduces the statement to the $n=2$ case by an induction, which in turn is based on a result due to Colliot-Th\'{e}l\`{e}ne and Totaro \cite[Lemma 2.4]{To16}.

\thanks{\textbf{Acknowledgments.} We would like to thank Ivan Cheltsov for pointing out the question to us in the first place, and Fedor Bogomolov, Fabrizio Catanese, Jean-Louis Colliot-Th\'{e}l\`{e}ne, Andrew Kresch, Kristian Ranestad, Miles Reid, and Yuri Tschinkel for useful discussions and suggestions about part of the material in this article.

We are especially thankful to Zhi Jiang, Zhiyu Tian, and Letao Zhang for communicating the material in Section \ref{sGeneralTheorem} to us and letting us include it in a revised version of this text.}

\section{Determinantal degenerations}\xlabel{sDetDeg}

In this Section we prove the case $n=2$ of Theorem \ref{tMain} using determinantal degenerations. Note that in this case, $A(u,v,w)$ is a three by three symmetric matrix of quadratic forms on $\PP^2$, which is general with this property for a very general hypersurface $H_{2,n}$, so to apply Theorem \ref{tVoisin} in conjunction with Theorem \ref{tArtinMumford}, it suffices to prove a symmetric three by three matrix of quadratic forms with determinant the union of two smooth cubic curves meeting transversely, and such that the two-torsion line bundle $\alpha$ defined by this matrix restricts nontrivially to each of these two cubics.

We will first accomplish this over a finite field, and then show that our example lifts to characteristic $0$. 

We recall two results from \cite{Beau00} which we will use.

\begin{theorem}\xlabel{tBeau1}
Let $(C, \alpha )$ be a pair consisting of an integral plane curve $C$ of degree $d$ and a non-trivial line bundle $\alpha$ with $\alpha^{\otimes 2} \simeq \mathcal{O}_C$.
\begin{enumerate}
\item
If $d$ is even, $d=2e$, then for a \emph{general} pair $(C, \alpha )$ the line bundle $\alpha$ has a minimal resolution
\begin{gather}\label{fResGen}
\xymatrix{
0 \ar[r] & \mathcal{O}_{\PP^2}(-e-1)^e \ar[r]^M & \mathcal{O}_{\PP^2}(-e+1)^e \ar[r] & \alpha \ar[r] & 0
}
\end{gather}
with $M$ symmetric with quadratic entries, and $\det M$ is a defining equation for $C$.
\item
If $d$ is odd, then $\kappa := \alpha ((d-3)/2)$ is a theta-characteristic on $C$, i.e. $\kappa^{\otimes 2}\simeq \omega_C$. The moduli space of pairs $(C, \kappa )$ has two (non-special) components, according to whether $h^0(\kappa)$ is even or odd, and a general point in these components satisfies $h^0(\kappa ) \le 1$.
\begin{enumerate}
\item
If $C$ is smooth and $h^0(\kappa ) =0$, then $\kappa$ admits a minimal resolution
\begin{gather}
\xymatrix{
0 \ar[r] & \mathcal{O}_{\PP^2}(-2)^d \ar[r]^M & \mathcal{O}_{\PP^2}(-1)^d \ar[r] & \kappa \ar[r] & 0
}
\end{gather}
where $M$ is symmetric linear and $\det M$ defines $C$.
\item
If $h^0 (\kappa ) =1$ and $C$ is smooth, then $\kappa$ admits a minimal resolution
\begin{gather}
\xymatrix{
0 \ar[r] & \mathcal{O}_{\PP^2}(-2)^{d-3}\oplus \mathcal{O}_{\PP^2}(-3) \ar[r]^M & \mathcal{O}_{\PP^2}(-1)^{d-3}\oplus \mathcal{O}_{\PP^2} \ar[r] & \kappa \ar[r] & 0
}
\end{gather}
where $M$ is symmetric with homogeneous forms as entries and $\det M$ defines $C$. 
\end{enumerate}
\end{enumerate}
\end{theorem}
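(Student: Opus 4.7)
The plan is to treat all three parts in parallel, based on two fundamental facts. The first is Hilbert--Burch: any line bundle $L$ on an integral plane curve $C$ of degree $d$, viewed as a sheaf on $\PP^2$, has projective dimension one, hence a minimal graded free resolution of the shape
\[
\xymatrix{
0 \ar[r] & F_1 \ar[r]^M & F_0 \ar[r] & L \ar[r] & 0,
}
\]
with $F_0$ and $F_1$ free of the same rank $n$ and with $\det M$ generating the annihilator of $L$, i.e., the defining ideal of $C$. The second is the duality $\mathcal{E}xt^1_{\mathcal{O}_{\PP^2}}(L, \mathcal{O}_{\PP^2}) \simeq L^{-1}\otimes \mathcal{O}_C(d)$, which can be obtained either by dualizing the resolution directly or from Grothendieck--Serre. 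Together they imply that dualizing the minimal resolution of $L$ and shifting appropriately yields a minimal resolution of $L^{-1}\otimes \mathcal{O}_C(d)$.

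First I would pin down the numerical shape in each case by a Riemann--Roch computation. For part (1), with $C$ of genus $g=(2e-1)(e-1)$, one finds $\chi(\alpha(e-1))=e$; generic vanishing for a general pair $(C,\alpha)$ gives $h^1(\alpha(e-1))=0$, hence $h^0(\alpha(e-1))=e$, and the evaluation map $\mathcal{O}_{\PP^2}^{\,e}\to \alpha(e-1)$ is surjective, producing the resolution $0 \to \mathcal{O}_{\PP^2}(-2)^e \to \mathcal{O}_{\PP^2}^{\,e} \to \alpha(e-1) \to 0$. Untwisting by $e-1$ recovers the stated shape. Parts (2a) and (2b) proceed similarly with the twist $\kappa = \alpha((d-3)/2)$, for which $\chi(\kappa)=0$: the dichotomy $h^0(\kappa)\in\{0,1\}$ distinguishes the two components of the moduli space of theta-characteristics (Mumford's theorem). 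In case (2a) one has $h^0(\kappa(1)) = d$ which gives the balanced resolution, while in case (2b) the extra section forces an $\mathcal{O}_{\PP^2}$-summand in $F_0$ and, by the duality used next, a matching $\mathcal{O}_{\PP^2}(-3)$-summand in $F_1$.

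Next, I would extract the symmetry of $M$. In each case the degrees of the summands of $F_0$ and $F_1$ can be paired so that the pair-sums are a common constant, equal to $d=2e$ in part (1) and to $3$ in parts (2a) and (2b). This is precisely the numerical condition under which the isomorphism $L \simeq L^{-1}\otimes \mathcal{O}_C(a+b)$, coming from $\alpha^{\otimes 2}\simeq\mathcal{O}_C$ (respectively $\kappa^{\otimes 2}\simeq \omega_C$), makes the dual of the minimal resolution isomorphic to the original. Uniqueness of minimal free resolutions up to isomorphism then produces an invertible matrix $P$ with $P\,M\,P^{t}=M$, i.e., a non-degenerate bilinear form on $F_1$ with respect to which $M$ is self-adjoint; after a change of basis one may take $P=\id$, and then $M=M^{t}$.

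The main obstacle is the subtlest point of Beauville's original argument: verifying that the pairing $P$ is genuinely \emph{symmetric} and not alternating. This is the place where the two-torsion (respectively theta-characteristic) hypothesis enters beyond its numerical role. Symmetry of $P$ is ensured because the self-duality of $L$ is induced by the symmetric multiplication isomorphisms $\alpha^{\otimes 2}\simeq \mathcal{O}_C$ and $\kappa^{\otimes 2}\simeq \omega_C$, whereas an alternating pairing would be forced by the antisymmetry of a hypothetical square-root structure that is not present here. Once $M$ is symmetric, the identification of $\det M$ with the defining equation of $C$ is immediate from Hilbert--Burch. For the mixed-degree case (2b) the matrix $M$ is block-structured and the same symmetric-matrix argument applies block-by-block.
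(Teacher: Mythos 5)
First, a point of reference: the paper offers no proof of this statement at all --- it is quoted verbatim from Beauville's \emph{Determinantal hypersurfaces} (Prop.~4.2 and Prop.~4.6 there, with the symmetrization coming from Theorem~B of that paper). So your sketch must be measured against Beauville's own argument, whose skeleton --- length-one Cohen--Macaulay resolution, the duality $\mathcal{E}xt^1_{\mathcal{O}_{\PP^2}}(L,\mathcal{O}_{\PP^2})\simeq L^{-1}\otimes\mathcal{O}_C(d)$, self-duality of $\alpha(e-1)$ resp.\ $\kappa$, and symmetrization via uniqueness of minimal resolutions --- you do reproduce correctly in outline.

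There are, however, two genuine gaps. (i) In part (1) you pass from $h^1(\alpha(e-1))=0$ plus surjectivity of the evaluation map to the resolution $0\to\mathcal{O}_{\PP^2}(-2)^e\to\mathcal{O}_{\PP^2}^e\to\alpha(e-1)\to 0$. Surjectivity of evaluation only yields a locally free kernel of rank $e$ and first Chern class $-2e$; to identify that kernel with $\mathcal{O}_{\PP^2}(-2)^e$ one needs, besides $H^0(\mathcal{L}(-1))=H^1(\mathcal{L})=0$, that the multiplication map $\mu_0\colon H^0(\mathcal{L})\otimes H^0(\mathcal{O}_C(1))\to H^0(\mathcal{L}(1))$ be an isomorphism --- exactly the criterion the paper records as Theorem~2.2. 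Moreover ``general pair'' hides the real content: one must show that the locus where these open conditions hold is non-empty, which Beauville does by explicit construction, and which ``generic vanishing'' does not supply. The same issue recurs in (2a) and (2b), where in addition the claim that the general \emph{plane} curve with a theta-characteristic has $h^0(\kappa)\le 1$ in each parity component needs an argument beyond quoting Mumford's parity theorem. (ii) Your justification that the self-duality pairing is symmetric rather than alternating (``antisymmetry of a hypothetical square-root structure'') is not an argument. The correct, elementary reason is: since $C$ is integral, $\mathrm{Hom}(L,L)=H^0(\mathcal{O}_C)=\CC$, so the transposed isomorphism ${}^t\lambda$ equals $c\lambda$ with $c^2=1$; and $c=-1$ would make the induced pairing $L\otimes L\to\mathcal{O}_C(d-2)$ alternating, hence identically zero on a rank-one sheaf over an integral curve, a contradiction. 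With that repaired, the averaging step producing $Q=P^t$ and the symmetric matrix $MP^{-1}$ goes through as you indicate.
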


This is \cite{Beau00}, Prop. 4.2 and Prop. 4.6.

In our present case of plane sextics, $e=3$, and we would like to understand if we can find a reducible sextic, splitting as two cubics meeting transversely, with an $\alpha$ that has a resolution of the form (2.1). Using another result of Beauville, we can formulate a criterion for this.

\begin{theorem}\xlabel{tBeau2}
Let $(C, \alpha )$ be a pair consisting of an integral plane curve $C$ of degree $d=2e$ and a two-torsion line bundle $\alpha$ on $C$. Put $\mathcal{L}:= \alpha (e-1)$. Thus $\mathcal{L}$ has degree $g-1+e$. The following are equivalent:
\begin{enumerate}
\item
$H^0 (C, \mathcal{L}(-1))= H^1 (C, \mathcal{L}) =0$ and the multiplication map
\[
\mu_0\colon H^0(C, \mathcal{L}) \otimes H^0 (C, \mathcal{O}_C(1)) \to H^0( C, \mathcal{L}(1))
\]
is an isomorphism.
\item
There is an exact sequence
\begin{gather}
\xymatrix{
0 \ar[r] & \mathcal{O}_{\PP^2}(-2)^e \ar[r]^N & \mathcal{O}_{\PP^2}^e \ar[r] & \mathcal{L}\ar[r] & 0
}
\end{gather}
and $\det N$ defines $C$. Moreover, if (1) or (2) holds, one can symmetrize the resolution, i.e. one can choose $N$ symmetric. 
\end{enumerate}
\end{theorem}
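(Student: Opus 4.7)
The plan is to read off both directions from the minimal graded free resolution of $\mathcal{L}$, viewed by push-forward as a coherent sheaf on $\PP^2$, exploiting the Serre-duality self-pairing forced by $\alpha^{\otimes 2}\cong \mathcal{O}_C$ for the symmetry. Set $W=H^0(\PP^2,\mathcal{O}_{\PP^2}(1))$. Riemann--Roch on the smooth plane curve $C$ of genus $g=(2e-1)(e-1)$ and degree $2e$, combined with $\deg \mathcal{L}=g-1+e$, gives $\chi(\mathcal{L})=e$ and $\chi(\mathcal{L}(-1))=-e$; hence the vanishings in~(1) force $h^0(\mathcal{L})=e$ and $h^1(\mathcal{L}(-1))=e$.

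The implication (2)$\Rightarrow$(1) is the easy direction: twist the resolution $0\to \mathcal{O}_{\PP^2}(-2)^e\xrightarrow{N}\mathcal{O}_{\PP^2}^e\to \mathcal{L}\to 0$ by $\mathcal{O}_{\PP^2}(j)$ for $j=-1,0,1$ and chase the cohomology long exact sequences; the vanishing of the $H^i$ of the relevant line bundles on $\PP^2$ immediately delivers $H^0(\mathcal{L}(-1))=H^1(\mathcal{L})=0$, and identifies $\mu_0$ with a surjection between two vector spaces of dimension $3e$, hence an isomorphism.

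For (1)$\Rightarrow$(2) let $0\to F_1\to F_0\to \mathcal{L}\to 0$ be the minimal free resolution of $\mathcal{L}$ on $\PP^2$. The sheaf $\mathcal{L}$ is Castelnuovo--Mumford $1$-regular ($H^1(\mathcal{L})=0$ by assumption, and $H^2$ vanishes automatically on a curve), and surjectivity of $\mu_0$ propagates via the standard Mumford iteration to all higher multiplication maps, so $\Gamma_*(\mathcal{L})$ is generated in degree $0$. Combined with $H^0(\mathcal{L}(-1))=0$ and $h^0(\mathcal{L})=e$ this yields $F_0\cong \mathcal{O}_{\PP^2}^e$. Injectivity of $\mu_0$ forbids linear syzygies, so the summands of $F_1$ all sit in degrees $\ge 2$. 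A Hilbert polynomial computation ($\mathcal{L}$ has Hilbert polynomial $2en+e$ while $F_0=\mathcal{O}_{\PP^2}^e$ has $e\binom{n+2}{2}$) pins the Hilbert polynomial of $F_1$ to $e\binom{n}{2}$, and together with $\mathrm{rk}\, F_1=e$ this forces $F_1\cong \mathcal{O}_{\PP^2}(-2)^e$. Finally $\det N$ is homogeneous of degree $2e=\deg C$ and generates the Fitting ideal of $\mathcal{L}$, i.e.\ the ideal of $C$, so up to a scalar it is a defining equation of $C$.

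The main obstacle is the symmetrization. From $\alpha^{\otimes 2}\cong \mathcal{O}_C$ and $\omega_C\cong \mathcal{O}_C(2e-3)$ one computes $\omega_C\otimes \mathcal{L}^{-1}\cong \mathcal{L}(-1)$, and Grothendieck--Serre duality $\mathcal{E}xt^1_{\PP^2}(\mathcal{L},\omega_{\PP^2})\cong \omega_C\otimes\mathcal{L}^{-1}$ then realises the dual complex of the resolution (with presentation matrix $N^t$ after the appropriate twist) as another minimal resolution of $\mathcal{L}$ itself. Uniqueness of the minimal resolution produces invertible constant $e\times e$ matrices $P,Q$ with $PN=N^tQ$. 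The delicate point, where I would expect the real work, is to exploit the fact that the self-duality on $\mathcal{L}$ comes from the manifestly symmetric pairing $\alpha\otimes \alpha\to \mathcal{O}_C$ given by $\alpha^{\otimes 2}\cong \mathcal{O}_C$: iterating the duality and comparing produces a compatibility between $P$ and $Q^t$ which lets one descend to a single $A\in \mathrm{GL}_e$ with $(P,Q)=(A^t,A^{-1})$, whereupon $AN$ is symmetric and presents $\mathcal{L}$ as required. The obstruction to such a descent is cohomological and vanishes precisely because the inducing bilinear pairing is itself symmetric.
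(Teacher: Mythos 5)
The paper does not prove this statement at all: it is quoted from Beauville's \emph{Determinantal hypersurfaces}, with the equivalence of (1) and (2) being Prop.~3.5 there and the symmetrization being Theorem~B, so there is no internal proof to compare against. Your reconstruction of the equivalence is correct and is essentially Beauville's own argument: the direction (2)$\Rightarrow$(1) by twisting and chasing cohomology is fine, and for (1)$\Rightarrow$(2) the combination of $1$-regularity, surjectivity and injectivity of $\mu_0$, $h^0(\mathcal{L})=e$, and the Hilbert-polynomial bookkeeping (each $a_i\ge 2$ and $\sum a_i=2e$ forcing $F_1\cong\mathcal{O}_{\PP^2}(-2)^e$) is exactly the right mechanism. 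Two small cautions: $C$ is only assumed integral, so Riemann--Roch and adjunction must be run with the arithmetic genus and the dualizing sheaf $\omega_C\cong\mathcal{O}_C(2e-3)$ of a Gorenstein curve, which works but should be said; and $H^0(C,\mathcal{O}_C(1))\cong H^0(\PP^2,\mathcal{O}(1))$ needs the one-line check from $0\to\mathcal{O}(1-2e)\to\mathcal{O}(1)\to\mathcal{O}_C(1)\to 0$.

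The genuine gap is in the symmetrization, which is precisely the content of Beauville's Theorem~B and which you acknowledge but do not actually carry out. As written, the linear algebra does not close: from $PN=N^tQ$ the substitution $(P,Q)=(A^t,A^{-1})$ gives $A^tN=N^tA^{-1}$, which does not make $AN$ symmetric, and no mechanism is given for producing the claimed compatibility between $P$ and $Q^t$. The correct route is: the pairing $\alpha\otimes\alpha\to\mathcal{O}_C$ induces a \emph{symmetric} isomorphism $\lambda\colon\mathcal{L}\to\mathcal{E}xt^1_{\PP^2}(\mathcal{L},\mathcal{O}_{\PP^2}(-2))$ (i.e.\ $\lambda$ equals its own Ext-dual under biduality); a lift of $\lambda$ to the minimal resolution and its transpose are both chain maps lifting $\lambda$, and since every homotopy lives in $\mathrm{Hom}(\mathcal{O}^e,\mathcal{O}(-2)^e)=0$ the lift is \emph{unique}, forcing $P=Q^t$ on the nose and hence $PN=N^tP^t=(PN)^t$. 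The remaining issue, and the one that actually requires work, is verifying that $\lambda$ is symmetric rather than alternating --- this is a sign computation in Grothendieck--Serre duality (going back to Catanese) and is not ``a cohomological obstruction that vanishes''; it must be checked. Without that verification the argument could equally well produce an alternating $N$, which for the intended application (a symmetric determinantal representation of $C$) would be fatal.
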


The first part about the equivalence of (1) and (2) is \cite{Beau00}, Prop. 3.5 in this special case, and the symmetrization statement is \cite{Beau00}, Theorem B.

\medskip

Let us now focus on the case $e=3$ and plane sextics again. Our first task is to describe two-torsion line bundles $\alpha$ sitting in a resolution (2.1) concretely. Such an $\alpha$ must satisfy that $\mathcal{L}=\alpha (2)$ has three sections. It is hence of the form $\mathcal{L}=\mathcal{O}(D)$ for a certain effective divisor $D$ of degree $12$ on $C$. We can assume that $D$ consists of $12$ points. There is a quartic containing these $12$ points, cutting out a divisor $D'$ residual to $D$ on $C$ where $\deg D' =12$ as well, and $|D| = |4H -D'|$, thus the linear system corresponding to $D$ is cut out by the quartics through $D'$. Actually, since $2D \equiv 4H$ as $\mathcal{L}^{\otimes 2} \simeq \mathcal{O}_C(4)$, one can choose $D'=D$, too. 

The equation $2D \equiv 4H$ means that there should be a quartic which cuts out the points in $D$ on $C$ with a double structure on $C$, hence is tangent to $C$ in those points. This condition is equivalent to $2D\equiv 4H$, hence to $\alpha$ being two-torsion. 

Thus our construction of a pair $(C, \alpha )$ with $C=C_1 \cup C_2$ splitting as two smooth cubics meeting transversely and $\alpha$ nontrivial $2$-torsion on $C$, $h^0(C, \alpha (2))=3$, having a resolution as in (2.1), proceeds via the following steps. We work over a finite field $\FF$ first, then discuss the lifting problem to characteristic $0$.

\begin{enumerate}
\item
Pick a smooth cubic $C_1$ and six points $D_1:=P_1\cup \dots \cup P_6$ on $C_1$ at random. 
\item
Compute the ideal of $2D_1$ (double on $C$!) and check whether the element of smallest degree in it is a quartic. This is not always the case, so if not, go back to Step 1. and wait till you get a quartic $Q_4$. This works because the codimension of the parameter space of the sought-for pairs $(C_1, D_1)$ is not too high.
\item
Pick six points $D_2:=Q_1\cup \dots \cup Q_6$ on $Q_4$ at random (different from $D_1$) and search for a cubic $C_2$ tangent to $Q_4$ in $D_2$ by the same random procedure as in 1. and 2.
\item
Take $C=C_1\cup C_2$ and $\mathcal{L}:= \mathcal{O}_C(4H -D_1-D_2)$. Check whether now $h^0(C, \mathcal{L})=3$, i.e. whether there is a three-dimensional space of quartics through $D=D_1\cup D_2$. This is not always the case (e.g. sometimes one gets a $4$-dimensional space), in which case one repeats the entire procedure until one succeeds.
\item
Compute a resolution of the full module of sections of $\mathcal{L}$ on $C$. This gives you a matrix $N$ as in (2.4).
\item
Symmetrize $N$ in the following way: by the symmetrization statement in Theorem \ref{tBeau2}, there will be scalar base change matrices $A$ and $B$ such that $ANB$ is symmetric; but then also $(A^{-1})AN B (A^{-1})^t$ will be symmetric. In other words, there will be a matrix $S$ such that $NS=:M$ is symmetric. These are linear equations for the entries of $S$ which are easy to solve.
\end{enumerate}

Carrying out steps 1 to 6, we found the following:
\begin{proposition}\xlabel{pDetDeg}
Consider the matrix 
{\tiny
\begin{gather*}
M =\\
 \begin{pmatrix} 
      -10x^2+xy-8y^2+8xz+5yz-9z^2 &
       -4x^2-5xy+3y^2+5xz-11yz-7z^2 &
       4xy-8y^2+6xz+yz-8z^2 \\
      -4x^2-5xy+3y^2+5xz-11yz-7z^2 &
       -8x^2+9xy-2y^2-7xz+yz-9z^2 &
       8xy-5y^2-6xz+11yz+9z^2\\
      4xy-8y^2+6xz+yz-8z^2 & 
       8xy-5y^2-6xz+11yz+9z^2 & 
       xy-6y^2+10xz+2yz+2z^2
  \end{pmatrix}
\end{gather*}
}
%M =\\
 %\begin{pmatrix} 
 %     -10x_0^2+x_0x_1-8x_1^2+8x_0x_2+5x_1x_2-9x_2^2 &
  %     -4x_0^2-5x_0x_1+3x_1^2+5x_0x_2-11x_1x_2-7x_2^2 &
%       4x_0x_1-8x_1^2+6x_0x_2+x_1x_2-8x_2^2 \\
%      -4x_0^2-5x_0x_1+3x_1^2+5x_0x_2-11x_1x_2-7x_2^2 &
%       -8x_0^2+9x_0x_1-2x_1^2-7x_0x_2+x_1x_2-9x_2^2 &
%       8x_0x_1-5x_1^2-6x_0x_2+11x_1x_2+9x_2^2\\
%      4x_0x_1-8x_1^2+6x_0x_2+x_1x_2-8x_2^2 & 
%       8x_0x_1-5x_1^2-6x_0x_2+11x_1x_2+9x_2^2 & 
%       x_0x_1-6x_1^2+10x_0x_2+2x_1x_2+2x_2^2
%  \end{pmatrix}
%\end{gather*}
over the finite field $\FF_{23}$. Then:
\begin{enumerate}
\item
The determinant $\det M$ defines two smooth cubic curves $C_1$ and $C_2$ meeting transversely, and $M$ has corank $1$ in every point of $C_1\cup C_2$.
\item
The two torsion line bundle $\alpha$ defined by $M$ is nontrivial on both $C_1$ and $C_2$.
\end{enumerate}
\end{proposition}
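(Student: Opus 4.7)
The proposition asserts concrete properties of the explicit matrix $M$ over $\FF_{23}$, so the plan is to verify each of them by finite Gr\"obner-basis computations in $\FF_{23}[x,y,z]$. For (1), I would first compute $\det M$ and factor it as $\det M = f_1 f_2$ into two cubics (which the construction in Steps 1--6 is designed to produce). Smoothness of each $C_i := V(f_i)$ is certified by checking that the Jacobian ideal $(f_i, \partial_x f_i, \partial_y f_i, \partial_z f_i)$ has empty zero locus in $\PP^2_{\FF_{23}}$, i.e.\ that its saturation at the irrelevant ideal $(x,y,z)$ is the unit ideal. Transversality of $C_1 \cap C_2$ is verified by confirming that the ideal $(f_1, f_2)$ is saturated-radical of colength $9$; by B\'ezout this guarantees nine distinct intersection points and hence transverse meeting. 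For the corank condition, I would compute the ideal $I_2(M)$ generated by the nine $2\times 2$ minors of $M$ and verify that $V(I_2(M)) = \emptyset$ in $\PP^2$; since $M$ has corank at least $2$ exactly at points of $V(I_2(M))$, this forces the corank to be precisely $1$ along $C_1 \cup C_2$.

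For (2), the corank-one result just established implies that the restriction $\alpha_i := \alpha \otimes \sO_{C_i}$ is a genuine line bundle on the smooth genus-one curve $C_i$, and $\alpha_i^{\otimes 2} \cong \sO_{C_i}$ because $M$ is symmetric with $\det M$ vanishing on $C_i$. On an elliptic curve a $2$-torsion line bundle is trivial if and only if it admits a nonzero global section, so it suffices to verify that $H^0(C_i, \alpha_i) = 0$ for $i = 1, 2$. This can be read off directly from the presentation of $\alpha_i$ as the cokernel of $M \bmod f_i$ viewed as an $\sO_{C_i}$-linear map $\sO_{C_i}(-4)^3 \to \sO_{C_i}(-2)^3$: passing to global sections in appropriate degrees turns the nontriviality question into a finite-dimensional rank computation over $\FF_{23}$, again mechanical in a computer algebra system.

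The main obstacle is not the verification, which is deterministic once $M$ is given, but rather the \emph{existence} of such a matrix. Each of the conditions (reducedness of the sextic, smoothness of the two cubic components, transversality of their intersection, constancy of the corank at $1$ everywhere on $C_1 \cup C_2$, and nontriviality of $\alpha$ on each component) is a nontrivial closed condition on the space of symmetric $3\times 3$ matrices of quadratic forms, and these conditions can fail for unrelated reasons, so $M$ cannot reasonably be guessed by hand. The matrix displayed above was produced by iterating the randomised procedure in Steps 1--6 over $\FF_{23}$ until all the tests pass simultaneously; the proposition then follows by the straightforward verification outlined above, which we have carried out in a computer algebra system.
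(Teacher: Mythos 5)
Your proposal is correct and matches the paper's proof, which is exactly the Macaulay2 verification you describe: factor $\det M$, certify smoothness of the two cubics via the Jacobian criterion, certify transversality by checking the intersection ideal is reduced of degree $9$, and certify corank $1$ by checking the $2\times 2$ minors of $M$ cut out the empty set in $\PP^2$. The only (inessential) variation is in part (2): where you test $H^0(C_i,\alpha|_{C_i})=0$ directly on each genus-one component, the paper's script instead takes the divisor of a section of $\alpha(2)$, restricts to get six points on each cubic, and checks via the Betti table that these six points are \emph{not} a complete intersection of the cubic with a conic --- an equivalent finite linear-algebra certificate of nontriviality.
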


\begin{proof}
This is a Macaulay2 computation, see \cite{BB-M2-16}.
\end{proof}

Now we have to address the lifting problem.

\begin{lemma}\xlabel{lLifting}
Suppose that over a finite field $\FF_p$, $p\neq 2$, there is a reducible integral curve $C$ which splits as a union of two smooth cubics $C=C_1\cup C_2$ meeting transversely, and a nontrivial two-torsion line bundle $\alpha$ on $C$ which has a resolution of the form (2.1); equivalently, this amounts to the existence of a symmetric matrix $M$ with quadratic forms as entries such that $\det M=0$ defines two smooth cubics meeting transversely such that $M$ has rank $2$ in every point of $C=C_1\cup C_2$. Also assume that $\alpha$ restricts nontrivially to both $C_1$ and $C_2$. Then there exists such an example over $\CC$ as well. 
\end{lemma}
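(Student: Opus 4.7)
The plan is to build a moduli scheme $\mathcal{M}$ over $\mathrm{Spec}(\ZZ[1/2])$ parametrising precisely the data in the hypothesis, to show $\mathcal{M}$ is smooth over $\ZZ[1/2]$, to lift the given $\FF_p$-point to a $W(\FF_p)$-point via the infinitesimal lifting criterion for smoothness, and finally to pass to characteristic $0$ via the generic fibre and the Lefschetz principle.

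Concretely, I would let $\mathcal{M}$ parametrise triples $(F_1,F_2,\alpha)$ such that: (a) $F_1,F_2\in H^0(\PP^2,\sO(3))$ are cubic forms defining smooth plane cubics $C_1,C_2$ meeting transversely, so $C:=C_1\cup C_2$ is a nodal reducible sextic with $9$ nodes; (b) $\alpha$ is a $2$-torsion line bundle on $C$ with $\alpha|_{C_i}\not\simeq\sO_{C_i}$ for $i=1,2$; (c) $h^0(C,\alpha(2))=3$. By Theorem \ref{tBeau2}, condition (c) is equivalent to $\alpha$ admitting a resolution of the form (2.1), and the symmetrisation statement of that theorem converts the triple into a symmetric $3\times 3$ matrix $M$ as in the lemma, unique up to the natural $\mathrm{GL}_3$-action on such resolutions.

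The key step is to verify that $\mathcal{M}$ is smooth over $\mathrm{Spec}(\ZZ[1/2])$. I would factor the structure map as $\mathcal{M}\to\mathcal{S}\to\mathrm{Spec}(\ZZ[1/2])$, where $\mathcal{S}\subset H^0(\PP^2,\sO(3))^{\oplus 2}$ is the open locus cut out by (a); $\mathcal{S}$ is smooth over $\ZZ[1/2]$ as it is open in affine space. The fibre of $\mathcal{M}\to\mathcal{S}$ over $(F_1,F_2)$ is the open subset of $\mathrm{Pic}^{(0,0)}(C)[2]$ defined by (b) and (c): (b) is open as the complement of a finite union of sections, and (c) is open by upper semicontinuity in view of the Riemann-Roch computation $\chi(\alpha(2))=\deg(\alpha(2))+1-p_a(C)=12+1-10=3$, which forces $h^0\geq 3$ with equality on the open locus where $h^1=0$. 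Since $C$ is nodal and $2$ is invertible on the base, the generalised Picard of $C$ has étale $2$-torsion, so $\mathcal{M}\to\mathcal{S}$ is étale; hence $\mathcal{M}$ is smooth over $\ZZ[1/2]$.

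The hypothesis of the lemma provides an $\FF_p$-point of $\mathcal{M}$. Base-changing to $\mathrm{Spec}(W(\FF_p))$, smoothness lifts this point to a $W(\FF_p)$-point whose generic fibre is a point of $\mathcal{M}$ over $\mathrm{Frac}(W(\FF_p))$, a characteristic-$0$ field. Spreading out to a finitely generated $\QQ$-algebra inside that field and applying the Lefschetz principle produces a $\CC$-point of $\mathcal{M}$, and Theorems \ref{tBeau1}(1) and \ref{tBeau2} convert this into the sought-for symmetric matrix over $\CC$. The main obstacle is the smoothness of $\mathcal{M}$, which crucially uses that $\mathrm{Pic}^0(C)[2]$ is étale for nodal $C$ in residue characteristic $\neq 2$; this is exactly where the hypothesis $p\neq 2$ enters.
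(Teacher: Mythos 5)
Your proposal is correct and follows essentially the same route as the paper: both arguments lift the pair (nodal sextic, two-torsion bundle) to characteristic zero using that the $2$-torsion of the relevant Picard/Jacobian scheme is finite \'{e}tale (the paper instead decomposes $\alpha$ into bundles on the two cubics plus gluing data and cites Tate for finite flatness of the $2$-torsion of the relative Jacobians when $p\neq 2$), and both propagate the resolution condition to the generic fibre by openness via Theorem \ref{tBeau2}. One small caveat: condition (1) of Theorem \ref{tBeau2} is strictly stronger than $h^0(C,\alpha(2))=3$ alone (it also requires $H^0(C,\alpha(1))=0$ and that $\mu_0$ be an isomorphism), but since all three conditions are open and hold at the given $\FF_p$-point, this does not affect your argument.
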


\begin{proof}
The parameter space of the pairs $C_1, C_2$ is given by 
\[
P:=\PP ( \FF_p [X_0, X_1, X_2]_3)^0 \times \PP (\FF_p [X_0, X_1, X_2]_3)^0
\]
a product of two open subsets in projective spaces (already defined over $\ZZ$), and the datum of an $\alpha$ on $C_1\cup C_2$ amounts to the datum of a nontrivial two-torsion line bundle $\alpha_1$ on $C_1$, a nontrivial two-torsion line bundle on $C_2$, and an isomorphism of the vector bundle fibers $(\alpha_1)_x \to (\alpha_2)_x$ in each intersection point of $x\in C_1\cap C_2$. Now the space $P$ is the special fiber, over the closed point corresponding to $p$, of a family
\[
\mathcal{P} \to \mathrm{Spec}(\ZZ )
\]
 where $\mathcal{P}$ is an open in a product of two projective spaces defined over $\ZZ$; and by \cite{Ta97}  the two-torsion points in the product of relative Jacobians over $\mathcal{P}$ form a finite flat group scheme since $p\neq 2$. Hence we can lift $C_1\cup C_2$ as well as $\alpha_1$ and $\alpha_2$ to characteristic zero, and we can also lift the linear isomorphisms $(\alpha_1)_x \to (\alpha_2)_x$ in the intersection points along with this.

Hence we get a family defined over some ring of integers $\mathfrak{o}$ in a number field. Now by \ref{tBeau2}, the property that $\alpha$ on $C$ has a resolution of the required type is generic in this family, and it holds at a closed point of $\mathrm{Spec}(\mathfrak{o})$ lying over $p$. Hence it holds at the generic point, hence over $\CC$ as well, thus for a lift to characteristic $0$. Also the lift of $\alpha$ will restrict nontrivially to each component since this is true at the special point. 
\end{proof}

\begin{theorem}\xlabel{tNonStabRat22}
The very general hypersurface of bidegree $(2,2)$ in $\PP^2\times \PP^2$ is not stably rational (over $\CC$).
\end{theorem}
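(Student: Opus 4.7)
The plan is to assemble the ingredients of this section into a direct application of Voisin's degeneration principle (Theorem \ref{tVoisin}), using the conic bundle structure on bidegree $(2,2)$ hypersurfaces and the symmetric matrix constructed in Proposition \ref{pDetDeg} (and lifted to $\CC$ by Lemma \ref{lLifting}) as the special fiber.

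More concretely, I would take $B$ to be the smooth open subset of the projective space parametrising symmetric $3\times 3$ matrices $A(u,v,w)$ of quadratic forms on $\PP^2_{(u:v:w)}$ whose associated bidegree $(2,2)$ hypersurface is integral; let $o\in B$ be the point corresponding to a characteristic-zero lift $M$ of the matrix of Proposition \ref{pDetDeg} supplied by Lemma \ref{lLifting}. Over $B$ consider the universal hypersurface
\[
 \mathcal{X}:= \{\,(x,y,z)\,A(u,v,w)\,(x,y,z)^t=0\,\}\;\subset\;\PP^2_{(x:y:z)}\times\PP^2_{(u:v:w)}\times B,
\]
with $f\colon\mathcal{X}\to B$ the projection. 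This morphism is flat and projective; a very general fiber is a very general smooth bidegree $(2,2)$ hypersurface, and the fiber $X:=\mathcal{X}_o$ is integral because $M$ has rank $3$ off a curve, so the conic bundle $X\to\PP^2_{(u:v:w)}$ has a smooth, hence irreducible, conic as its generic fiber.

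By Proposition \ref{pDetDeg} and Lemma \ref{lLifting}, the discriminant of the conic bundle $X\to\PP^2$ is a union $C=C_1\cup C_2$ of two smooth plane cubics meeting transversally, $M$ has corank exactly $1$ at every point of $C$, and the two-torsion line bundle $\alpha$ determined by $M$ restricts non-trivially to each $C_i$. These are precisely the hypotheses of Theorem \ref{tArtinMumford}, which therefore produces a resolution $\sigma\colon\tilde{X}\to X$ whose scheme-theoretic fibers are smooth rational (condition (2) of Theorem \ref{tVoisin}) and for which $H^3(\tilde{X},\ZZ)$ has non-trivial torsion (condition (1)). Theorem \ref{tVoisin} applied to $f\colon\mathcal{X}\to B$ then yields that the fiber over a very general $b\in B$ is not stably rational, which is the claim.

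The substantive work has already been carried out: the construction of the explicit symmetric matrix in Proposition \ref{pDetDeg} cannot be produced by inspection and required a random search over $\FF_{23}$ together with the lifting argument of Lemma \ref{lLifting}. Once that is in hand, the remaining verification—that the degenerate fiber $X$ satisfies the combined hypotheses of Theorems \ref{tArtinMumford} and \ref{tVoisin}—is routine, and constitutes essentially the entire proof of Theorem \ref{tNonStabRat22}.
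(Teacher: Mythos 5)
Your proposal is correct and follows exactly the paper's route: the paper's own proof of this theorem is a one-line deduction from Theorems \ref{tVoisin} and \ref{tArtinMumford}, Proposition \ref{pDetDeg} and Lemma \ref{lLifting}, and you have simply written out the same deduction with the family over the matrix parameter space made explicit. No gaps.
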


\begin{proof}
This follows immediately from Theorems \ref{tVoisin} and \ref{tArtinMumford} now that we have constructed the matrix $M$ above and proven Lemma \ref{lLifting}. 
\end{proof}

If one wants to proceed further, i.e. treat cases of hypersurfaces of bidegree $(2,n)$, $n$ even, for $n>2$, then the above brute-force computational approach does not work anymore (the codimension of the sought-for Prym curves in their parameter space is too high). Thus in the next section we use a different method to prove Theorem \ref{tMain} for any even $n$.

\section{Construction methods for good Prym curves and degenerations to Artin-Mumford type examples via the Koszul complex}\xlabel{sGoodDegPrym}

In this Section we prove Theorem \ref{tMain} for any $n=2m$ even. We start with a geometric method to construct good plane Prym curves in the sense of Definition \ref{dGoodPrym}, generalizing a geometric construction of Artin and Mumford in \cite{AM72}  p. 79 ff. and p. 93/94.

\begin{lemma}\xlabel{lAM}
Let $A$ be a smooth plane curve of even degree $a$, and let $C_1, \dots , C_r$ be smooth plane curves of degrees $c_1, \dots , c_r$ where all $c_i > a$, and every $C_i$ is tangent to $A$ in $c_ia/2$ distinct points. Let $\mathcal{P}$ be this set of points on $C=C_1\cup \dots \cup C_r$, and let $\mathcal{P}_i$ be the set of points lying on $C_i$. Then the line bundle $\alpha$ associated to $\mathcal{P} - (a/2) h$ (where $h$ is the intersection of $C$ with a general line in $\PP^2$) is nontrivial $2$-torsion on every component $C_i$.
\end{lemma}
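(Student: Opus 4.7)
\emph{Two-torsion part.} My plan is to handle the two halves of the claim separately on each component $C_i$. For the $2$-torsion assertion: by B\'ezout the total intersection $A\cdot C_i$ has degree $ac_i$, while the $c_i a/2$ tangency points each contribute intersection multiplicity at least $2$, giving a lower bound of $c_i a$. Equality of degrees forces every contact multiplicity to be exactly $2$ and these tangent points to exhaust $A\cap C_i$. Hence $A\cap C_i = 2\mathcal{P}_i$ as a divisor on $C_i$; since $A\cap C_i$ is linearly equivalent to $a$ times a hyperplane section of $C_i$, one obtains $2\mathcal{P}_i\sim a(h\mid_{C_i})$, so $\alpha\mid_{C_i}$ squares to $\mathcal{O}_{C_i}$.

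\emph{Nontriviality.} I would argue by contradiction, assuming $\mathcal{O}_{C_i}(\mathcal{P}_i)\simeq\mathcal{O}_{C_i}(a/2)$. The ideal sheaf sequence of $C_i\subset\PP^2$ twisted by $\mathcal{O}_{\PP^2}(a/2)$, combined with $H^1(\PP^2,\mathcal{O}(a/2-c_i))=0$, yields a surjection $H^0(\PP^2,\mathcal{O}(a/2))\to H^0(C_i,\mathcal{O}_{C_i}(a/2))$. Lifting the tautological section of $\mathcal{O}_{C_i}(\mathcal{P}_i)$ through this surjection produces a plane curve $B_i$ of degree $a/2$ that cuts out exactly $\mathcal{P}_i$ on $C_i$. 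Then $A$ and $B_i^2$ are two plane curves of the same degree $a$ whose restrictions to $C_i$ coincide (both equal the divisor $2\mathcal{P}_i$). The hypothesis $a<c_i$ makes the restriction $H^0(\PP^2,\mathcal{O}(a))\to H^0(C_i,\mathcal{O}_{C_i}(a))$ injective, since its kernel $H^0(\PP^2,\mathcal{O}(a-c_i))$ vanishes. Hence $A$ and $B_i^2$ are proportional as homogeneous forms of degree $a$ on $\PP^2$, which is absurd: $A$ defines a smooth and therefore reduced plane curve, whereas $B_i^2$ defines a non-reduced one.

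The real obstacle is this second step, where one must upgrade a linear equivalence of divisors on $C_i$ into an identity of forms on the ambient $\PP^2$. Both halves of that upgrade --- the surjectivity producing $B_i$ and the injectivity forcing $A\propto B_i^2$ --- rest critically on the inequality $c_i>a$, and the final contradiction uses the smoothness of $A$; these are exactly the hypotheses built into the lemma. The argument is a direct generalisation of the original Artin--Mumford construction (the case $a=2$, $c_i=3$, where a single conic meets a cubic in six points and the analogue of $B_i$ is a line), and notably no general-position requirement on the tangent points is needed beyond what is already stated.
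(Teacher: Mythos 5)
Your proposal is correct, and its skeleton matches the paper's: the two-torsion statement comes from $A$ cutting out $2\mathcal{P}$ on $C$, and nontriviality is proved by contradiction by lifting the hypothesized degree-$a/2$ divisor class on $C_i$ to an actual plane curve of degree $a/2$, using surjectivity of the restriction of $H^0(\PP^2,\mathcal{O}(a/2))$ onto $H^0(C_i,\mathcal{O}_{C_i}(a/2))$. Where you diverge is the endgame. The paper stays with the auxiliary curve $R$ (your $B_i$) of degree $a/2$ and observes that $\mathcal{P}_i=R\cap C_i$ forces $A\cap C_i\subset A\cap R$, which by B\'ezout has at most $a^2/2$ points, strictly fewer than the $ac_i/2$ tangency points since $c_i>a$. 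You instead compare $A$ and $B_i^2$ as degree-$a$ forms: since they cut the same divisor $2\mathcal{P}_i$ on the integral curve $C_i$ they agree up to scalar on $C_i$, and the vanishing of $H^0(\PP^2,\mathcal{O}(a-c_i))$ promotes this to an identity of forms on $\PP^2$, contradicting reducedness of the smooth curve $A$. Both endgames consume the hypothesis $c_i>a$, just in different places (a point count versus injectivity of restriction in degree $a$); yours has the small additional (and easily supplied) step that equal divisors of sections of the same line bundle on an integral projective curve give proportional sections. A minor plus of your write-up is that the B\'ezout argument in the first half makes explicit why the $c_ia/2$ tangency points exhaust $A\cap C_i$ with contact order exactly two, which the paper only asserts ``by construction.''
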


\begin{proof}
This is an easy extension of \cite{AM72}, Lemma p. 93: by construction, $2\mathcal{P}$ is cut out by $A$ on $C$, so $\alpha$ is $2$-torsion; it is nontrivial on every $C_i$ by the following reasoning: suppose by contradiction that it was trivial on $C_i$. Let $q$ be the rational function on $\PP^2$ whose divisor is $A - aH$, $H$ some fixed line in $\PP^2$ so that $h=H\cap C$. Then the restriction $\bar{q}$ of $q$ to $C_i$ would be a square $\bar{q} = \bar{s}^2$ for some $\bar{s}\in \CC (C_i)$ if $\alpha$ was trivial on $C_i$. Then 
\[
\bar{s} \in H^0 (C, \mathcal{O}_{C_i}((a/2)H . C_i)
\]
and since $H^0 (\PP^2, \mathcal{O}_{\PP^2}((a/2) H)) \to H^0 (C_i, \mathcal{O}_{C_i}((a/2) H . C_i)$ is surjective, there would be a function $s$ in $\CC (\PP^2)$ that lifts $\bar{s}$. Moreover, $(s) = R - (a/2)H$ where $R$ is another curve of degree $a/2$ in $\PP^2$. But then, set-theoretically, $A\cap C_i$ would be equal to $R \cap C_i$, hence $A \cap C_i \subset A \cap R$, and the latter consists of at most $a^2/2$ points, which is strictly smaller than $ac_i/2$, contradiction.
\end{proof}

\begin{remark}\xlabel{rRanestad}
In particular, if the curve $C = C_1 \cup \dots \cup C_r$ in Lemma \ref{lAM} has only ordinary double points, the the pair $(C, \alpha )$ is a good Prym curve and by Theorem \ref{tArtinMumford}, the associated conic bundle is not stably rational. Here is one geometric way to produce an arrangement of curves $A, C_1, \dots , C_r$ as in Lemma \ref{lAM}: in fact, it suffices to show how to produce smooth plane curves $C, A$, $\deg C > \deg A$ with $\deg A = a = 2a'$ even, and $C$ tangent to $A$ in $a'c$ distinct points. For this, we can use  Theorem \ref{tBeau1} (1) and Bertini's theorem. Start with a smooth $A$ of degree $a=2a' >2$ with a nontrivial square-root $\alpha$ of $\mathcal{O}_A$ with a resolution of type \ref{fResGen}. Suppose the linear system $|\alpha (m)|$ is of dimension $\ge 1$ and base-point free. By Theorem \ref{tBeau1} (1) this will be the case as soon as $m \ge a' -1$. Hence, by Bertini's base-point free pencil theorem, there will be an effective divisor in $|\alpha (m)|$, $D$ say, consisting of $m\cdot a$ distinct points. Then $|2D| = |\mathcal{O}(2m)|$, hence there is a curve $C_0$ of degree $c= 2m$ intersecting $A$ tangentially in the points in $D$. Suppose now we choose $m$ such that $c > a$. Consider the linear system $\mathcal{L}$ of plane curves of degree $c$ intersecting $A$ tangentially in the points in $D$. $\mathcal{L}$ contains curves which are smooth in the points in $D$, e.g. $A + C'$ for a general curve $C'$ of degree $c-a$. This also shows that the linear system $\mathcal{L}$ has no basepoints outside of $D$ (since $C_0$ intersects $A$ only in $D$ and belongs to $\mathcal{L}$, but a curve of type $A + C'$ can avoid any given point outside $A$). Hence the general curve in $\mathcal{L}$ will be smooth (by Bertini's theorem again, and this will also hold in the points $D$ since there are curves in $\mathcal{L}$ that are smooth in $D$) and cut out precisely $2D$ on $A$ as desired. 

This method produces many examples of good Prym curves and hence of conic bundles which are not stably rational. 

We would like to thank Kristian Ranestad for suggesting this geometric approach. 
\end{remark}

For our immediate purpose of proving Theorem \ref{tMain} for $n$ even, we will, however, construct good Prym curves of degree $\deg C = d$ even and divisible by $3$, hence $d = 6m$, for $m\ge 1$, by a ``dirty trick" different from the construction method in Remark \ref{rRanestad}. Note that $n=2m$ then in the notation of Theorem \ref{tMain}. These curves will have minimal graded free resolutions
\begin{gather}\label{fPrymGood}
\xymatrix{
0 \ar[r] & \mathcal{O}_{\PP^2}\left( -5m \right)\oplus\mathcal{O}_{\PP^2}\left( -4m \right) \ar[r]^M & \mathcal{O}_{\PP^2}\left(-m \right) \oplus \mathcal{O}_{\PP^2}\left(-2m \right) \ar[r] & \alpha \ar[r] & 0
}
\end{gather}
where $M$ is a two by two matrix with entries homogeneous polynomials in $\CC [u, v, w]$ of degrees
\begin{gather*}
\begin{pmatrix}
4m & 3m\\
3m & 2m
\end{pmatrix}.
\end{gather*}
The construction method will allow us to conclude that we can degenerate our discriminant Prym curves with resolution type \ref{fResAlpha} to these good Prym curves once we have proven that fact for $n=2$.

\begin{proposition}\xlabel{pGoodPryms}
There are good Prym curves $(C', \alpha')$ with resolution type \ref{fPrymGood} which are pull-backs of good plane sextic Prym curves with resolution type \ref{fPrymGood} (for the case $m=1$) under a (degree $m^2$ ramified) covering map
\[
\gamma\colon \PP^2 \to \PP^2 , \quad (u:v:w) \mapsto (u^m : v^m : w^m).
\]
\end{proposition}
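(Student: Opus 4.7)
The plan is to produce $(C',\alpha')$ as the pull-back $(\gamma^{-1}(C),\gamma^*\alpha)$ of a good plane sextic Prym curve $(C,\alpha)$ with resolution of type (\ref{fPrymGood}) for $m=1$: such a sextic, with $C=C_1\cup C_2$ a transverse union of two smooth cubics and $\alpha$ restricting non-trivially to each component, will be produced separately (either by the geometric construction of Lemma \ref{lAM} and Remark \ref{rRanestad} applied with $A$ a smooth conic tangent to each $C_i$ in three points, or by the Koszul-complex approach of this section for the $n=2$ case), and for the present proof we take its existence as given. The verification then breaks into three parts: the resolution is inherited by $\gamma^*$, the curve $C'$ is a good nodal configuration, and $\alpha'$ restricts non-trivially to each component.

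Inheritance of the resolution is immediate. The map $\gamma$ is finite flat of degree $m^2$, indeed Galois with group $G=\mu_m\times\mu_m$ acting on the source as $(\zeta,\eta)\cdot(u:v:w)=(\zeta u:\eta v:w)$, so $\gamma^*$ is exact; combined with the identity $\gamma^*\mathcal{O}_{\PP^2}(k)=\mathcal{O}_{\PP^2}(mk)$ this takes the $m=1$ resolution to a short exact sequence of the form (\ref{fPrymGood}) for the given $m$, with presentation matrix $\gamma^*M$ whose entries have precisely the required bi-degrees. For the geometry of $C'=\gamma^{-1}(C)$, I would move $(C,\alpha)$ by a generic element of $\mathrm{PGL}_3$ so that (i) each $C_i$ meets the three coordinate lines transversally in three distinct smooth points and avoids the vertices; (ii) the nine nodes of $C_1\cap C_2$ lie off the coordinate lines; (iii) $\gamma^{-1}(C_i)$ is irreducible for $i=1,2$. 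These are open dense conditions on the parameter space, and resolution type (\ref{fPrymGood}) is preserved under the $\mathrm{PGL}_3$-action. Under (i), the restriction $\gamma\mid_{C'_i}$ is tamely cyclically ramified of index $m$ above each of the nine intersection points with coordinate lines, so a local model computation shows $C'_i$ is smooth; under (ii), the $9m^2$ preimages of the nodes are themselves ordinary transverse double points of $C'$; together with (iii) this identifies $C'=C'_1\cup C'_2$ as the nodal union of two smooth irreducible curves meeting transversally.

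The central step, and the main obstacle, is the non-triviality of $\alpha'\mid_{C'_i}=\gamma_i^*(\alpha\mid_{C_i})$, where $\gamma_i:=\gamma\mid_{C'_i}\colon C'_i\to C_i$; I would settle this by a Galois-theoretic argument exploiting the specific branch geometry of $\gamma$. Since $C'_i$ is irreducible and $G$-invariant, $\gamma_i$ is Galois with group $G$. If $\gamma_i^*(\alpha\mid_{C_i})$ were trivial, then using connectedness of $C'_i$ and the canonical trivialization of $(\alpha\mid_{C_i})^{\otimes 2}$ one constructs a factorization of $\gamma_i$ through the \'{e}tale double cover $\tilde C_i\to C_i$ determined by $\alpha\mid_{C_i}$; by the Galois correspondence this gives $\tilde C_i=C'_i/H$ for some index-$2$ subgroup $H\subseteq G$, and \'{e}taleness of $\tilde C_i\to C_i$ forces $H$ to contain every inertia subgroup of $\gamma_i$. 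But the inertia subgroups are precisely the pointwise stabilizers $I_1=\mu_m\times\{1\}$, $I_2=\{1\}\times\mu_m$, and $I_3=\{(\zeta,\zeta):\zeta\in\mu_m\}$ of the three coordinate lines, and $\langle I_1,I_2\rangle=G$ already, forcing $H=G$, a contradiction. (For $m$ odd the conclusion is automatic, since $G$ then has no subgroup of index $2$.) Hence $\alpha'\mid_{C'_i}$ is non-trivial two-torsion on each $C'_i$, and together with the previous steps this makes $(C',\alpha')$ a good Prym curve with resolution type (\ref{fPrymGood}).
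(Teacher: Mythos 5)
Your proposal is correct, and its overall scaffolding (pull back a good sextic under $\gamma$ after a generic projective change of coordinates, deduce the resolution type by flat pullback/substitution in the presentation matrix, and check smoothness, irreducibility and nodality of the preimages by a local computation along the coordinate triangle) matches the paper's proof. Where you genuinely diverge is in the central step, the nontriviality of $\alpha'$ on each component $C'_i$. The paper obtains this by re-applying Lemma \ref{lAM}: the sextic example is built from a conic $A$ tangent to the two cubics, the whole configuration $A, C_1, C_2$ together with the cubic through the tangency points is put in general position with respect to the coordinate triangle, and then $\gamma^*(A), \gamma^*(C_1), \gamma^*(C_2)$ again satisfy the hypotheses of Lemma \ref{lAM} (a smooth curve of degree $2m$ tangent to smooth curves of degree $3m > 2m$ in $3m^2$ distinct points), so the degree count in that lemma yields nontriviality directly. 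You instead argue Galois-theoretically: $\gamma_i\colon C'_i \to C_i$ is a $\mu_m\times\mu_m$-cover whose inertia subgroups along the three coordinate lines already generate the whole group, so no intermediate \'etale double cover of $C_i$ can be split off, hence $\gamma_i^*$ kills no nontrivial two-torsion class (and you correctly handle the normalization of the trivialization needed to split the associated double cover). Your argument buys something the paper's does not: it is independent of the tangent-conic origin of $\alpha$, so it shows that \emph{any} nontrivial two-torsion line bundle on $C_i$ pulls back nontrivially, and it needs fewer genericity conditions (only $C_1$, $C_2$ and their nodes must be in general position relative to the coordinate triangle, not also $A$ and the tangency divisor). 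The paper's route is more elementary and stays uniform with the construction of the degree-six base case. Both arguments are sound, and the resolution bookkeeping and the conclusion agree.
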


\begin{proof}
It is easy to construct good Prym curves of degree $6$ using Lemma \ref{lAM} with $A$ a conic and $C= C_1 \cup C_2$ with $C_i$ cubics: this is what Artin and Mumford do on page 79 ff.; it just amounts to the existence of smooth cubics tangent to a given conic in some set of three points (which we are free to choose a priori). Now if we choose coordinates $u, v, w$ such that $A, C_1, C_2$ and the cubic cutting out the six tangency points on $A$ are all transverse to $u=0$, $v=0$ and $w=0$ in smooth points, and contain none of the intersection points of two of the coordinate axes, then we can apply Lemma \ref{lAM} to the curves $A' = \gamma^* (A), \gamma^* (C_1), \gamma^* (C_2)$ (all of these are smooth, hence irreducible, under the above assumptions) to conclude that $\alpha' = \gamma^* (\alpha )$ is still nontrivial on every component of $C' = \gamma^* (C)$ (this is the main usage of Lemma \ref{lAM}) and $(C', \alpha')$ has resolution type \ref{fPrymGood} with a presentation matrix
\begin{gather*}
\begin{pmatrix}
c' & b'\\
b' & a'
\end{pmatrix}
\end{gather*}
where $a'$ has degree $2m$, $c'$ degree $4m$, $b'$ degree $3m$, and this matrix is obtained from a matrix
\begin{gather*}
\begin{pmatrix}
c & b\\
b & a
\end{pmatrix}
\end{gather*}
where $a$ has degree $2$, $c$ degree $4$, $b$ degree $3$, by the substitution 
\[
u\mapsto u^m, v\mapsto v^m, w \mapsto w^m.
\]
\end{proof}

To prove Theorem \ref{tMain} for $n$ even we now start with a solution of the case $n=2$ (where the discriminant curve is a plane sextic) that is different from the one in the previous section, and involves degeneration to Artin-Mumford type Prym sextics. A few more pieces of terminology are useful.

\begin{definition}\xlabel{dAMGenSextic}
Henceforth in this section a plane sextic Prym curve $(C, \alpha)$ will mean an at most nodal reduced plane curve $C$ with smooth irreducible components where $\alpha$ is a two torsion line bundle on it. 

We call $(C, \alpha)$ \emph{of general type} if $C$ is smooth, and $\alpha$ is nontrivial with minimal graded free resolution
\begin{gather}\label{fSexticGeneral}
\xymatrix{
0 \ar[r] & \mathcal{O}_{\PP^2}(-4)^3 \ar[r]^A & \mathcal{O}_{\PP^2}(-2)^3 \ar[r] & \alpha \ar[r] & 0
}
\end{gather}
where $A$ is a symmetric three by three matrix with quadratic entries.

We call a plane sextic Prym curve $(C, \alpha)$ \emph{of Artin-Mumford type} if there is a minimal graded free resolution
\begin{gather}\label{fSexticGeneral}
\xymatrix{
0 \ar[r] & \mathcal{O}_{\PP^2}(-4)^3 \ar[r]^A & \mathcal{O}_{\PP^2}(-2)^3 \ar[r] & \alpha \ar[r] & 0
}
\end{gather}
where $B$ is symmetric and the degrees of the entries in $B$ are
\[
\begin{pmatrix}
4 & 3\\
3 & 2
\end{pmatrix}.
\]
We call a plane Prym sextic curve of Artin-Mumford type good if moreover it is good in the sense of Definition \ref{dGoodPrym}. 
\end{definition}

The proof of Theorem \ref{tMain} for $n$ even will be an immediate consequence of Theorem \ref{tVoisin} and Theorem \ref{tArtinMumford} together with Theorem \ref{tSexticDegeneration} below and Proposition \ref{pGoodPryms} above.

\begin{theorem}\xlabel{tSexticDegeneration}
\begin{enumerate}
\item[(i)]
There is a Zariski-open neighborhood $T$ of the origin $0\in \mathbb{A}^1\subset \PP^1$, and a (flat) family of sextic plane Prym curves
\begin{gather}\label{fPrymFamily}
\xymatrix{
(\mathcal{C}, \underline{\alpha}) \ar[d]^{\pi} \\
B 
}
\end{gather}
such that for $t\in B$, $t\neq 0$, the fiber $(\mathcal{C}_t, (\underline{\alpha})_t)$ is a Prym sextic of general type, and $(\mathcal{C}_0, (\underline{\alpha})_0)$ is a plane sextic Prym curve of Artin-Mumford type.
\item[(ii)]
Any general sextic plane Prym curve of Artin-Mumford type occurs as the central fiber $(\mathcal{C}_0, (\underline{\alpha})_0)$ in a family as in (i).
\item[(iii)]
The families of Prym curves in (i) and (ii) can be chosen to arise as the family of discriminant Prym curves of a family of conic bundles over $\PP^2$
\begin{gather}\label{fConicBundle}
\xymatrix{
\mathcal{Q}\ar[d]^{p}\ar@{^{(}->}[r] &  \PP (\mathcal{E}) \ar[ld]^p\\
\PP^2 \times B & 
}
\end{gather}
where: 
\begin{itemize}
\item[(a)]
$\mathcal{E}$ is a rank $3$ vector bundle over $\PP^2\times B$ with $\mathcal{E}_t\simeq \mathcal{O}(-2)^3$ for $t\neq 0$ and $\mathcal{E}_0 \simeq \Omega^1(-1) \oplus \mathcal{O}(-1)$.
\item[(b)]
For $t\neq 0$ general, the total space of $\mathcal{Q}_t \to \PP^2 \times \{ t\}\simeq \PP^2$ is smooth and this conic bundle has discriminant Prym curve $(\mathcal{C}_t, (\underline{\alpha})_t)$.
\item[(c)]
For $t=0$, the total space of $\mathcal{Q}_0 \to \PP^2 \times \{ 0\}\simeq \PP^2$ has at worst double points as singularities and this conic bundle has discriminant Prym curve $(\mathcal{C}_0, (\underline{\alpha})_0)$. Double points occur if $(\mathcal{C}_0, (\underline{\alpha})_0)$ is good, hence does have nodes.
\end{itemize}
\item[(iv)]
There exists a family of conic bundles as in (iii) such that $(\mathcal{C}_0, (\underline{\alpha})_0)$ is a good sextic plane Prym curve of Artin-Mumford type.
\end{enumerate}
\end{theorem}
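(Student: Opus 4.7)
The plan is to build a family of conic bundles inside $\PP(\mathcal{E})$ where $\mathcal{E}$ is a rank-$3$ vector bundle on $\PP^2\times B$ that deforms between $\Omega^1(-1)\oplus \mathcal{O}(-1)$ (at $t=0$) and $\mathcal{O}(-2)^3$ (generically). The key enabling fact is the twisted Euler sequence
\[
0 \to \Omega^1(-1) \to \mathcal{O}(-2)^3 \to \mathcal{O}(-1) \to 0,
\]
which realizes $\mathcal{O}(-2)^3$ as a non-split extension in $\mathrm{Ext}^1(\mathcal{O}(-1),\Omega^1(-1))\simeq H^1(\PP^2,\Omega^1)\simeq \CC$. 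Hence a one-parameter family of extension classes $t\cdot \xi$ (with $\xi$ a generator) gives a bundle $\mathcal{E}$ on $\PP^2\times B$ with
\[
0 \to p^*\Omega^1(-1) \to \mathcal{E} \to p^*\mathcal{O}(-1) \to 0
\]
satisfying $\mathcal{E}_t\simeq \mathcal{O}(-2)^3$ for $t\neq 0$ and $\mathcal{E}_0\simeq \Omega^1(-1)\oplus \mathcal{O}(-1)$. This handles (iii)(a).

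For the quadratic form I would choose $q\in H^0(\PP^2\times B, \mathrm{Sym}^2 \mathcal{E}^\vee \otimes p^*\mathcal{L})$ for the appropriate twist $\mathcal{L}$ so that $q_0$ decomposes, via the splitting of $\mathcal{E}_0$, into three ``block'' entries $a$, $b$, $c$ of effective degrees $2,3,4$, belonging to $H^0(\mathcal{O}(2)\otimes \mathcal{L})$, $H^0(T(2)\otimes \mathcal{L})$, and $H^0(\mathrm{Sym}^2 T(1)\otimes \mathcal{L})$ respectively. The \emph{bi-graded Koszul complex on $\PP^2\times \PP^1$} is used here to provide explicit bi-homogeneous models for sections of $T(2)$ and $\mathrm{Sym}^2 T(1)$: realizing these bundles as pushforwards from $\PP^2\times \PP^1$ of sheaves appearing in a suitable Koszul complex lets one describe $b$ and $c$ concretely in terms of polynomials, matching the Artin--Mumford matrix entries. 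One then verifies that the Prym line bundle $\alpha_0$, obtained as the cokernel of the induced map $\tilde q_0\colon \mathcal{E}_0 \to \mathcal{E}_0^\vee \otimes \mathcal{L}$, has the Artin--Mumford minimal graded free resolution \eqref{fPrymGood}; this comes down to a vanishing such as $H^0(\alpha_0)=0$ together with Theorem \ref{tBeau2} (or is read off directly from the Koszul description). For a generic $q$ and $t\neq 0$, $q_t$ is a generic symmetric $3\times 3$ matrix of quadratic forms, so $\mathcal{Q}_t$ is smooth and $(\mathcal{C}_t,(\underline{\alpha})_t)$ is a sextic Prym of general type; this yields (i) and (iii)(b).

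For (iii)(c) a local analysis does it: at a smooth point of $\mathcal{C}_0$ the form $q_0$ drops to corank $1$ and the fiber of $\mathcal{Q}_0$ is a pair of distinct lines meeting in a smooth point of the total space; above a node of $\mathcal{C}_0$ where two smooth branches of $\mathcal{C}_0$ meet transversally, $q_0$ drops to corank $1$ along both branches simultaneously, and a short local coordinate calculation shows that $\mathcal{Q}_0$ acquires precisely an ordinary double point and nothing worse. For (ii), varying $(a,b,c)$ among triples of the required degrees sweeps out the moduli space of general Artin--Mumford type Prym sextics, using the symmetrization statement of Theorem \ref{tBeau2}. For (iv), take $a$ to be the equation of a smooth conic $A$ and choose $(b,c)$ so that $ac-b^2$ factors as $C_1\cdot C_2$ for smooth cubics $C_1, C_2$ each tangent to $A$ in three points and meeting each other transversally, as in Lemma \ref{lAM}; the resulting Prym curve is good, and the block matrix with entries $(a,b,c)$ is an Artin--Mumford type presentation, yielding a family as in (iv) by applying the construction of (iii) to this $q_0$.

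The main obstacle will be the combination of (iii)(c) with the minimality claim at $t=0$: one must show simultaneously that the $(a,b,c)$-block resolution of $\alpha_0$ is truly the \emph{minimal} graded free resolution (ruling out non-obvious cancellations forcing it to collapse back to the shape \eqref{fResAlpha}), and that the total space $\mathcal{Q}_0$ acquires only ordinary double points even when $\mathcal{C}_0$ is reducible and nodal as in (iv). The first requires a sharp cohomological vanishing verified via the explicit Koszul complex, and the second a careful local-coordinate argument at the nodes. Together, these steps are precisely where the elementary Koszul-complex approach advertised in the introduction replaces the stack-theoretic constructions of \cite{HKT15}.
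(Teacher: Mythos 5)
Your overall architecture coincides with the paper's: the Euler sequence to produce the jumping family $\mathcal{E}$ with $\mathcal{E}_0\simeq\Omega^1(-1)\oplus\mathcal{O}(-1)$ and $\mathcal{E}_t\simeq\mathcal{O}(-2)^3$ for $t\neq 0$, the bi-graded Koszul complex on $\PP^2\times\PP^1$ as the explicit model, the block form $(a,b,c)$ of degrees $(2,3,4)$ matching the Artin--Mumford presentation, and the choice of two cubics tangent to a conic (Lemma \ref{lAM}) to make the central fiber good. However, there is a genuine gap at the single most important step. You write that one should ``choose $q\in H^0(\PP^2\times B,\mathrm{Sym}^2\mathcal{E}^\vee\otimes p^*\mathcal{L})$ so that $q_0$ decomposes into the blocks $a,b,c$,'' but you never establish that a symmetric form on $\mathcal{E}$ with a \emph{prescribed} restriction $q_0$ of Artin--Mumford type actually exists. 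The restriction map from symmetric forms on $\mathcal{E}$ to symmetric forms on $\mathcal{E}_0$ has no a priori reason to be surjective (the dimension of $H^0(\mathrm{Sym}^2\mathcal{E}_t^\vee\otimes\mathcal{L}_t)$ can jump at $t=0$ since $\mathcal{E}_0$ is a different bundle), and this lifting problem is exactly what the paper's Proposition \ref{pDeformationPhi} together with Lemmata \ref{lBo1} and \ref{lBo2} solves: the presentation matrix $N_0$ of $\Phi_0$ is split into three pieces, and each piece is extended by hand to a symmetric matrix annihilating the Koszul syzygy $(u,v,w,t)^t$, using the Euler identities $J(u,v,w)^t=3g^t$, $(u,v,w)J=-g$ for the Jacobian and $H(u,v,w)^t=3J^t$ for the Hessian. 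Without this (or a cohomological surjectivity argument replacing it), your family $q$ is not known to exist and the rest of the proof does not get off the ground.

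A secondary soft spot: you assert that the cokernel of $q_0$ on $\mathcal{E}_0$ ``is read off directly from the Koszul description'' to have the Artin--Mumford resolution. In the paper this is Proposition \ref{pIsomCoker}, and it is not immediate: the map $\Phi_0$ is only given through a presentation $N_0=AMA^t$ obtained by conjugating the $5\times5$ matrix $M$, and one must verify that $\coker\Phi_0$ and $\coker M$ are the \emph{same} line bundle on the \emph{same} sextic (the paper does this by comparing rank loci and the divisors of zeros of canonical sections of $\beta(1)$ and $\beta'(1)$). Likewise, your claim that for generic $q$ and $t\neq 0$ the form $q_t$ is a \emph{generic} symmetric matrix of quadrics does not follow, since the $q$'s you construct are constrained by the central fiber; what is actually needed, and what the paper checks by explicit computation in one example, is that the general member of the constructed family has smooth total space and discriminant Prym curve of general type. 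These two points are repairable, but the missing symmetric extension is the step you must supply.
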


Before embarking on the proof of Theorem \ref{tSexticDegeneration}, let us show how Theorem \ref{tMain} for $n$ even follows from it.

\begin{proof}[Proof of Theorem \ref{tMain} for $n$ even:]
Using Theorem \ref{tArtinMumford}, Theorem \ref{tVoisin} is applicable to a family of conic bundles as in Theorem \ref{tSexticDegeneration} if the central fiber is a good Artin-Mumford plane sextic Prym curve, which we may assume by part (iv) of Theorem \ref{tSexticDegeneration}. This shows the case $n=2$ of Theorem \ref{tMain}.

If $n=2m>2$, we start with a family as for the proof of the case $n=2$ we just gave, and pull it back via the covering map of Proposition \ref{pGoodPryms}:
\[
\gamma\colon \PP^2 \to \PP^2 , \quad (u:v:w) \mapsto (u^m : v^m : w^m).
\]
If we choose the projective coordinate system generic, we will get a family of conic bundles with generic discriminant Prym curves of type \ref{fResAlpha}, and special fiber with discriminant Prym curve of type \ref{fPrymGood} and moreover good in the sense of Definition \ref{dGoodPrym} by Proposition \ref{pGoodPryms}. Hence Theorem \ref{tMain} for $n$ even follows from Theorem \ref{tVoisin} again. 
\end{proof}

Our method to prove Theorem \ref{tSexticDegeneration} is closely related to the Koszul complex and replaces the deformation theory of Azumaya algebras and tame Deligne-Mumford stacks in \cite{HKT15} by something a lot more concrete in this special situation. 

\emph{Description of the family of vector bundles in Theorem \ref{tSexticDegeneration}, (iii), (a):} By the Euler sequence
\begin{gather}\label{fEuler}
\xymatrix{
0 \ar[r] & \mathcal{O}_{\PP^2} \ar[r] & \mathcal{O}_{\PP^2}(1)^3 \ar[r] & \mathcal{T}_{\PP^2} \ar[r] & 0
}
\end{gather}
we see that we can take $\mathcal{E}_0^{\vee}(-6) :=\mathcal{O}(-5) \oplus \mathcal{T}(-5)$ as a bundle that  deforms to $\mathcal{O}(-4)^3$ (and is not graded free) because generically we have a non split extension giving $\mathcal{O}(-4)^3$, and the split extension is $\mathcal{E}_0^{\vee}(-6)$. Hence, $\mathcal{E}_0 \simeq \Omega^1 (-1) \oplus \mathcal{O}(-1)$. Also note the duality
\begin{gather}\label{fDualTang}
\mathcal{T} \simeq \Omega^1 (3)
\end{gather}
obtained from the nondegenerate pairing $\mathcal{T} \otimes \mathcal{T} \to K_{\PP^2}^{-1}$. 

In principle, this describes a family $\mathcal{E}$, but it is convenient to give a more explicit construction of $\mathcal{E}$ via the Koszul complex: take $\PP^2$ with homogeneous coordinates $u,v,w$ and $\PP^1$ with homogeneous coordinates $s, t$ and look at the bi-graded Koszul complex for the regular sequence $(u,v,w,t)$:
\begin{gather}\label{fKoszulComplex}
\xymatrix{
0 \ar[r] & \mathcal{O}_{\PP^2 \times \PP^1} \ar[r]^{\mathcal{S}\quad\quad\quad} & 3 \mathcal{O}(1, 0) \oplus \mathcal{O}(0,1) \ar[r]^{\mathcal{A}} & 3 \mathcal{O}(2, 0) \oplus 3 \mathcal{O}(1,1) \\
            &   \ar[r]^{\mathcal{A}^t\quad\quad\quad}   & \mathcal{O}(3, 0) \oplus 3 \mathcal{O}(2,1) \ar[r]^{\mathcal{S}^t} & \mathcal{O}(3,1) \ar[r] & 0 
}
\end{gather}
Here we write $\mathcal{O}(i,j)= \mathrm{pr}_{\PP^2}^*\mathcal{O}_{\PP^2}(i) \otimes \mathrm{pr}_{\PP^1}^*\mathcal{O}_{\PP^1}(j)$. 

We twist this complex by $(-5,0)$ and give identify the kernel of $\mathcal{A}^t$ and cokernel of $\mathcal{A}$:

\begin{gather}\label{fKoszulComplexTwisted}
\xymatrix{
            &                                                                                       &                                                             0    \ar[rd]                           &                                                         &  0\\
            &                                                                                       &                                                                                                        & \mathcal{E}^{\vee}(-6,0) \ar[rd]\ar[ru] &              \\ 
0 \ar[r] & \mathcal{O}(-5,0) \ar[r]^{\mathcal{S}\quad\quad\quad} & 3 \mathcal{O}(-4, 0) \oplus \mathcal{O}(-5,1) \ar[rr]^{\mathcal{A}}\ar[ru] &  & 3 \mathcal{O}(-3, 0) \oplus 3 \mathcal{O}(-4,1) \\
             \ar[rd]  \ar[rr]^{\mathcal{A}^t\quad\quad\quad}   &   &  \mathcal{O}(-2, 0) \oplus 3 \mathcal{O}(-3,1) \ar[r]^{\quad\quad \mathcal{S}^t} & \mathcal{O}(-2,1) \ar[r] & 0 \\
                                                                             & \mathcal{E}(-1, 1) \ar[ru] \ar[rd]       &     &   &  \\
                                                          0 \ar[ru]       &                                                    & 0  &  &
}
\end{gather}

Note that for $t\neq 0$, $\mathcal{E}^{\vee}(-6,k)$ restricts to $\mathcal{O}_{\PP^2}(-4)$ on $\PP^2 \times \{ (s:t)\}$, and for $t=0$, it restricts to $\mathcal{E}_0^{\vee}(-6)$ by the Euler sequence. 

\

\emph{Deformation theory of symmetric maps $\Phi_0 \colon \mathcal{E}_0^{\vee}(-6) \to \mathcal{E}_0$ to symmetric maps $\Phi \colon \mathcal{E}^{\vee}(-6, -2) \to \mathcal{E}$:} We will prove parts (ii), (iii), (iv) of Theorem \ref{tSexticDegeneration} in one stroke; we need a few preliminary observations.  We start with a method to produce symmetric such maps $\Phi_0$. 

Consider a diagram
\begin{gather}\label{dMNzero}
\xymatrix{
	0 \ar[r] & \sO(-5) \ar[r]^-{(u,v,w,0)^t} & 3\sO(-4) \oplus \sO(-5)  \ar[r]^-{A^t}  \ar[d]^{N_0} & 3\sO (-3) \oplus \sO(-5) \oplus \sO(-4)  \ar[d]^M & \\
	0 & \sO(-1) \ar[l] & 3\sO(-2) \oplus \sO(-1)  \ar[l]_-{(u,v,w,0)} & 3\sO (-3) \oplus \sO(-1) \oplus \sO(-2)    \ar[l]^-A  \\
	}
\end{gather}
where
\begin{gather}\label{fM}
	M = \begin{pmatrix}
	0 & 0 & u & v & w \\
	0 & f & c_u & c_v & c_w \\
	u & c_u & 1 & 0 & 0 \\
	v & c_v & 0 & 1 & 0 \\
	w & c_w & 0 & 0 & 1 
	\end{pmatrix}
\end{gather}

with $c_u, c_v, c_w$ and $f$ homogeneous polynomials of degrees $2,2,2$ and $4$ in $K[u,v,w]$, and
\[
	A = \begin{pmatrix}
	 0 & 0 & S\\ 
	 0 & 1 & 0
	 \end{pmatrix}
\]
with
\[
	S = \begin{pmatrix}
	  0 & w & -v \\
	  -w & 0 & u \\
	  v & -u & 0 \\
	  \end{pmatrix}.
\]
Every such 
\[
	N_0 = A M A^t
\]
defines a symmetric map $\Phi_0 \colon \sE_0^{\vee}(-6) \to \sE_0$ via
\begin{gather}\label{dPhizero}
\xymatrix{
	0 \ar[r] & \sO(-5) \ar[r]^-{(u,v,w,0)^t} & 3\sO(-4) \oplus \sO(-5)  \ar[r] \ar[d]^{N_0} & \sE_0^{\vee}(-6)  \ar[d]^{\Phi_0} \ar[r] & 0\\
	0 & \sO(-1) \ar[l] & 3\sO(-2) \oplus \sO(-1)  \ar[l]_-{(u,v,w,0)} & \sE_0    \ar[l] & \ar[l] 0 \\
	}
\end{gather}

\medskip 

Now we look for a deformation $\Phi$ of $\Phi_0$ 
\begin{gather}\label{dPhi}
\xymatrix{
	0 \ar[r] & \sO(-5,-2) \ar[r]^-{(u,v,w,t)^t} & 3\sO(-4,-2) \oplus \sO(-5,-1)  \ar[r]  \ar[d]^N & \sE^{\vee}(-6,-2) \ar[r] \ar[d]^\Phi & 0 \\
	0 & \sO(-1,0) \ar[l]  & 3\sO(-2,0) \oplus \sO(-1,-1)  \ar[l]_-{(u,v,w,t)} & \sE  \ar[l] & \ar[l] 0 \\
	}
\end{gather}
with $N_{\{t=0\}} = N_0$ and $\Phi_{\{t=0\}} = \Phi_0$. 

\begin{proposition}\xlabel{pDeformationPhi}
Every symmetric map $\Phi_0 \colon \sE_0^{\vee} (-6) \to \sE_0$ defined via a matrix $M$ as in \ref{fM} via diagrams \ref{dMNzero}  and \ref{dPhizero} can be deformed to a symmetric map $\Phi \colon \mathcal{E}^{\vee}(-6,-2) \to \mathcal{E}$ as in diagram \ref{dPhi}. 
\end{proposition}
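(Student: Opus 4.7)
The plan is to construct the deformation $\Phi$ explicitly by transplanting every ingredient in the definition of $\Phi_0$ to the bi-graded setting on $\PP^2\times\PP^1$, using the twisted Koszul complex \ref{fKoszulComplexTwisted} as the engine. The key observation is that in diagram \ref{dMNzero}, the matrix $A$ is built out of the Koszul syzygy matrix $S$ for the regular sequence $(u,v,w)$, while in \ref{fKoszulComplexTwisted} there is an analogous differential $\mathcal A$ encoding the syzygies of the longer regular sequence $(u,v,w,t)$. At $t=0$ the complex \ref{fKoszulComplexTwisted} specializes to the Koszul complex for $(u,v,w)$, so the structural features of diagram \ref{dMNzero} literally sit inside it.

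First, I will lift $A$ to a matrix $\tilde A$ with entries bi-homogeneous in $K[u,v,w,s,t]$ by replacing the $3\times 3$ Koszul block $S$ with the corresponding piece of $\mathcal A$ (which carries the deformed Koszul relations $\mathcal A\circ\mathcal S=0$ on $(u,v,w,t)$), and by padding the remaining entries of $A$ with appropriate powers of $s$ so that all bi-degrees match the deformed source $3\sO(-4,-2)\oplus\sO(-5,-1)$ and target $3\sO(-2,0)\oplus\sO(-1,-1)$ of $N$. By construction $\tilde A|_{t=0}=A$ (after using $s=1$ to identify the restriction with the original Koszul complex on $\PP^2$), and the Koszul identity gives $\tilde A^t\cdot (u,v,w,t)^t=0$ identically.

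Second, I will lift $M$ from \ref{fM} to a symmetric matrix $\tilde M$ over $K[u,v,w,s,t]$ by the same padding procedure: each entry is multiplied by the unique power of $s$ dictated by the deformed bi-degrees of its source and target summands. Because $M$ involves only $u,v,w$, this padding is unambiguous and preserves the symmetry, and $\tilde M|_{t=0}=M$. Setting $N:=\tilde A\,\tilde M\,\tilde A^t$, one immediately has $N|_{t=0}=AMA^t=N_0$, and $N$ is symmetric in the twist-compatible sense inherited from $\tilde M$.

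Third, I will verify that $N$ descends to the desired symmetric morphism $\Phi\colon\sE^\vee(-6,-2)\to\sE$ filling in diagram \ref{dPhi}. The computation
\[
N\cdot (u,v,w,t)^t \;=\; \tilde A\,\tilde M\,\bigl(\tilde A^t\cdot (u,v,w,t)^t\bigr)\;=\;0,
\]
together with its transpose $(u,v,w,t)\cdot N=0$ obtained from symmetry, is precisely the pair of conditions which (via the two Koszul resolutions in \ref{fKoszulComplexTwisted}) forces $N$ to factor as a symmetric map from the cokernel $\sE^\vee(-6,-2)$ on the source into the kernel $\sE$ on the target; restriction to $t=0$ then recovers $\Phi_0$ by step two. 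The main obstacle I anticipate is purely bookkeeping: verifying that after inserting the block from $\mathcal A$ and padding with powers of $s$, every entry of $\tilde A$, $\tilde M$ and their product $N$ lies in the correct bi-graded summand, and that the generalized symmetry needed for the descended $\Phi$ to be symmetric is preserved. Once this degree matching is carried out, the Koszul-theoretic descent and symmetry are automatic.
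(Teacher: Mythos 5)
Your reduction of the problem is correct---everything comes down to producing a symmetric $N$ with $N\cdot(u,v,w,t)^t=0$ and $N|_{t=0}=N_0$---but the mechanism you propose for producing such an $N$ fails at its central step. You want $N=\tilde A\,\tilde M\,\tilde A^t$ with $\tilde A^t\cdot(u,v,w,t)^t=0$, i.e.\ with every column of the $4\times 5$ matrix $\tilde A$ a syzygy of the regular sequence $(u,v,w,t)$. The three columns of $A$ coming from the Koszul block $S$ do deform this way (that part of your intuition is sound, and likewise the block $\bigl(\begin{smallmatrix}-S^2&0\\0&0\end{smallmatrix}\bigr)$ of $N_0$ needs no correction because $t$ pairs with a zero row there). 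But the column $(0,0,0,1)^t$ of $A$---the unit entry in the row $(0,1,0,0,0)$---does not: it is a syzygy of $(u,v,w,0)$ only because the unit $1$ pairs with the entry $0$. Any bi-homogeneous lift $(a,b,c,d)$ with $d\equiv 1$ and $(a,b,c)\equiv 0$ modulo $t$ satisfies $ua+vb+wc+td\equiv t\,(ua_1+vb_1+wc_1+1)\pmod{t^2}$, and $ua_1+vb_1+wc_1=-1$ has no polynomial solution since the left-hand side lies in the ideal $(u,v,w)$. Equivalently, the syzygy module of $(u,v,w,t)$ is generated by the six Koszul syzygies, whose reductions mod $t$ only ever place elements of $(u,v,w)$, never a unit, in the fourth slot. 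So no lift $\tilde A$ with the property you assert exists, and the factorization $N_0=AMA^t$ simply does not survive the deformation. Nor can one ask only that the full product annihilate $(u,v,w,t)^t$: the defect $\tilde A^t(u,v,w,t)^t\equiv(0,t,0,0,0)^t$ is pushed through $\tilde M$ and $\tilde A$ to $t$ times (a deformation of) $\bigl(S(c_u,c_v,c_w)^t,\,f\bigr)$, which is nonzero in general.

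This is exactly why the paper's proof abandons the factorization and works with $N_0$ directly: it splits $N_0$ into $\bigl(\begin{smallmatrix}-S^2&0\\0&0\end{smallmatrix}\bigr)+\bigl(\begin{smallmatrix}0&g^t\\g&0\end{smallmatrix}\bigr)+\bigl(\begin{smallmatrix}0&0\\0&f\end{smallmatrix}\bigr)$, keeps the first summand as is, and equips the other two with explicit $t$-linear and $t$-quadratic correction terms built from the Jacobian of $g$ and the Jacobian and Hessian of $f$, the required identities coming from Euler's relation for homogeneous polynomials (Lemmata \ref{lBo1} and \ref{lBo2}). Some non-Koszul input of this kind is unavoidable, so the difficulty you set aside as ``purely bookkeeping'' is in fact where the entire content of the proposition lies.
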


\begin{proof}
For this observe that $N_0$ can be written as
\[
	N_0 = \begin{pmatrix} 
	-S^2 & g^t \\
		g & f
	       \end{pmatrix} 
\]
with $g = (g_u,g_v,g_w)$ a vector of degree $3$ polynomials and $g (u,v,w)^t = 0$.

We look for a symmetric $N$ over $K[u,v,w] \otimes K[s,t]$ such that
\[
 	N (u,v,w,t)^t = 0
\]
and
\[
	N_{\{t=0\}} = N_0.
\]
For this we write 
\[
	N_0 = 
	\begin{pmatrix} -S^2 & 0 \\ 0 & 0 \end{pmatrix}
	+ \begin{pmatrix} 0 & g^t \\ g & 0 \end{pmatrix}
	+ \begin{pmatrix} 0 & 0 \\ 0 & f \end{pmatrix}
\]
The first matrix already satisfies our conditions. We now show that there are symmetric 
matrices $G$ and $F$, correctly bi-graded, that reduce to the second and third matrix for $t=0$ and
also satisfy 
\[
	G (u,v,w,t)^t = F(u,v,w,t)^t =0.
\]
We then set
\[
	N = s^2\begin{pmatrix} -S^2 & 0 \\ 0 & 0 \end{pmatrix} + sG + F.
\]

Thus the proof of Proposition \ref{pDeformationPhi} is concluded by Lemmata \ref{lBo1} and \ref{lBo2} below.
\end{proof}

\begin{lemma}\xlabel{lBo1}
Let $g  = (g_u,g_v,g_w)$ be a vector of homogeneous polynomials of degree $3$ in $K[u,v,w]$ such that 
\[
	ug_u + vg_v + wg_w = 0.
\]
Then there exists a symmetric $4\times 4$ matrix $G$ over $K[u,v,w,t]$ such that
\[
	G_{\{t=0\}} = \begin{pmatrix} 0 & g^t \\ g & 0 \end{pmatrix}
\]
and
\[
	G \begin{pmatrix} u \\ v \\ w \\ t \end{pmatrix} = 0.
\]
Moreover, the entries of $sG$ in the upper left $3\times 3$ submatrix have bi-degree $(2,2)$ in $(u,v,w; s,t)$, the bottom right entry has bi-degree $(4,0)$, the remaining entries bi-degree $(3,1)$. 
\end{lemma}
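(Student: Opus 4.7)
I would adopt the ansatz
\[
G=\begin{pmatrix} tH & g^{t}\\ g & 0\end{pmatrix},
\]
where $H$ is an unknown symmetric $3\times 3$ matrix with entries in $K[u,v,w]_{2}$. This shape is forced by the bi-degree data of the lemma together with the boundary condition $G_{\{t=0\}}$. Indeed, for the entries of $sG$ the $(4,4)$ slot has bi-degree $(4,0)$, so the corresponding entry of $G$ would carry bi-degree $(4,-1)$, which is impossible, hence $G_{44}=0$. The $(i,4)$ and $(4,j)$ entries have bi-degree $(3,0)$ in $G$, are $s,t$-free, and are pinned down by $G_{\{t=0\}}$ to $g_{i}$ and $g_{j}$. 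The upper-left $3\times 3$ block has bi-degree $(2,1)$ in $G$ and vanishes at $t=0$, so is $tH$ with $H_{ij}\in K[u,v,w]_{2}$ and $H$ symmetric to match $G=G^{t}$. Plugging the ansatz into $G\cdot(u,v,w,t)^{t}$, the fourth entry reads $g\cdot(u,v,w)^{t}$, which is zero by hypothesis, and the first three entries equal $t\,(Hx+g^{t})$ with $x=(u,v,w)^{t}$; since $t$ is a non-zero-divisor in $K[u,v,w,t]$, the lemma reduces to finding a symmetric matrix $H$ of quadratic forms with $Hx=-g^{t}$.

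\textbf{Construction of $H$.} Since $g\cdot x=0$, Koszul exactness of the regular sequence $(u,v,w)$ in $K[u,v,w]$ gives quadratic forms $A_{12},A_{13},A_{23}$ with
\[
g=A_{12}(v,-u,0)+A_{13}(w,0,-u)+A_{23}(0,w,-v).
\]
Packaging these into an antisymmetric matrix $\widetilde{A}$ via $\widetilde{A}_{ij}=A_{ij}$ for $i<j$ gives $g=\widetilde{A}\,x$, and the equation $Hx=-g^{t}$ becomes $(H+\widetilde{A})\,x=0$: every row of $M:=H+\widetilde{A}$ must be a syzygy on $(u,v,w)$. I would then expand each row of $M$ in the Koszul basis $(v,-u,0),(w,0,-u),(0,w,-v)$ with linear coefficients, impose the symmetry relations $M_{ij}-\widetilde{A}_{ij}=M_{ji}-\widetilde{A}_{ji}$ (which is exactly $H=H^{t}$), and verify by a monomial-by-monomial comparison in $K[u,v,w]_{2}$ that the resulting linear system in these coefficients is always consistent. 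Equivalently, this amounts to the surjectivity of the antisymmetrization map $\{M\in K[u,v,w]_{2}^{3\times 3}:Mx=0\}\to\mathrm{Antisym}_{3}\otimes K[u,v,w]_{2}$, a finite-dimensional linear-algebra check carried out against the computable dimension of the kernel $\{H\text{ symmetric}:Hx=0\}$.

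\textbf{Main obstacle.} The delicate step is the symmetrization. Setting $H_{ij}=H_{ji}=-A_{ij}$ for $i<j$ and $H_{ii}=0$ solves row $1$ but in row $2$ produces an obstruction of the form $2A_{12}u\in(v,w)$, and analogous sign clashes in row $3$: the Koszul generators contribute to the two rows they touch with opposite signs, whereas a symmetric $H$ enters the two rows with the same sign. The crucial input is that the second-Koszul freedom $(A_{12},A_{13},A_{23})\leadsto(A_{12}+\lambda w,A_{13}-\lambda v,A_{23}+\lambda u)$, $\lambda\in K[u,v,w]_{1}$, combined with the ability to add to $H$ any symmetric matrix annihilating $x$, supplies exactly enough freedom to absorb these mismatches monomial by monomial. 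Once $H$ is produced, the remaining assertions---symmetry of $G$, the bi-degrees of the entries of $sG$, the identity $G\cdot(u,v,w,t)^{t}=0$, and the boundary equality $G_{\{t=0\}}$---follow immediately from the shape of the ansatz.
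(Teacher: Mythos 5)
Your reduction is sound and in fact matches the shape of the paper's own solution: the ansatz $G=\bigl(\begin{smallmatrix} tH & g^{t}\\ g & 0\end{smallmatrix}\bigr)$ with $H$ a symmetric matrix of quadratic forms is exactly what the paper writes down, and the lemma does come down to producing a symmetric $H$ with $H(u,v,w)^{t}=-g^{t}$. But that existence statement is the entire content of the lemma, and your proposal does not establish it. You correctly observe that the naive choice $H_{ij}=-A_{ij}$ fails (a Koszul generator enters the two rows it touches with opposite signs, while a symmetric $H$ enters them with the same sign), and you then assert that the second-Koszul freedom together with the kernel $\{H\ \text{symmetric}:Hx=0\}$ ``supplies exactly enough freedom to absorb these mismatches monomial by monomial.'' That is precisely the claim that needs proof; as written it is a plan for a finite-dimensional surjectivity/rank computation (your antisymmetrization map goes from a $24$-dimensional source to an $18$-dimensional target, so you would have to show its kernel $\{H\ \text{symmetric}:Hx=0\}$ has dimension exactly $6$, e.g.\ by identifying it with $\{SAS: A\ \text{constant symmetric}\}$ for the skew matrix $S$ with $Sx=0$), and you never carry it out. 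Without that computation, or an explicit $H$, the argument is incomplete at its only nontrivial step.

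The paper closes this gap with a one-line formula that your proposal is missing: let $J$ be the Jacobian matrix of $g$ and set $H=-\tfrac{1}{2}\bigl(J+J^{t}\bigr)$. Euler's identity for the degree-$3$ forms $g_{u},g_{v},g_{w}$ gives $J(u,v,w)^{t}=3g^{t}$, while differentiating the relation $ug_{u}+vg_{v}+wg_{w}=0$ gives $(u,v,w)J=-g$, i.e.\ $J^{t}(u,v,w)^{t}=-g^{t}$; hence $(J+J^{t})(u,v,w)^{t}=2g^{t}$ and $H(u,v,w)^{t}=-g^{t}$ as required (the division by $2$ is harmless in the characteristic-$0$ setting of the paper). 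With that $H$ in hand, the rest of your verification of the syzygy, the symmetry, the bidegrees, and the boundary condition at $t=0$ goes through exactly as you describe.
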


\begin{proof}
Let
\[
	J = \begin{pmatrix}
	(g_u)_u & (g_u)_v & (g_u)_w \\
	(g_v)_u & (g_v)_v & (g_v)_w \\
	(g_w)_u & (g_w)_v & (g_w)_w 
	\end{pmatrix}
\]
be the Jacobian matrix of $g$. Observe that 
\[
	J \begin{pmatrix} u \\ v \\ w \end{pmatrix} = 3 g^t.
\]
Differentiating the equation
\[
	ug_u + vg_v + wg_w = 0
\]
we obtain
\[
	 g_u + u(g_u)_u + v(g_v)_u + w(g_w)_u  = 0
	 \iff u(g_u)_u + v(g_v)_u + w(g_w)_u = -g_u.
\]
as well as similar equations for $-g_v$ and $-g_w$.
It follows that
\[
	( u,v,w ) J = - g.
\]
Setting
\[
	G =  \begin{pmatrix}
	\frac{-t}{2}\bigl(J+J^t\bigr) & g^t \\
	g & 0
	\end{pmatrix}
\]
the above calculations show
\[
	G \begin{pmatrix} u \\ v \\ w \\ t \end{pmatrix} = 0.
\]
\end{proof}

\begin{lemma}\xlabel{lBo2}
Let $f \in K[u,v,w]$ be as above a  homogeneous polynomial of degree $4$. 
Then there exists a symmetric $4\times4$ matrix $F$ over $K[u,v,w,t]$ such that
\[
	F_{\{t=0\}} = \begin{pmatrix} 0 & 0 \\ 0  & f \end{pmatrix}
\]
and
\[
	F \begin{pmatrix} u \\ v \\ w \\ t \end{pmatrix} = 0.
\]
Moreover, the entries of $F$ in the upper left $3\times 3$ submatrix have bi-degree $(2,2)$ in $(u,v,w; s,t)$, the bottom right entry has bi-degree $(4,0)$, the remaining entries bi-degree $(3,1)$. 
\end{lemma}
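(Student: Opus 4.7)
The plan is to construct $F$ in block form, isolating the interaction with $t$ via two applications of Euler's identity. I write
\[
F = \begin{pmatrix} F_{11} & F_{12} \\ F_{12}^t & F_{22} \end{pmatrix}
\]
with $F_{11}$ a symmetric $3 \times 3$ block, $F_{12}$ a column of length $3$, and $F_{22}$ a scalar. The bi-degree $(4,0)$ requirement on $F_{22}$ forces $F_{22} \in K[u,v,w]$, and the initial condition at $t=0$ then pins down $F_{22} = f$ uniquely.

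The last row of the syzygy equation $F \cdot (u,v,w,t)^t = 0$ then reads $F_{12}^t (u,v,w)^t = -tf$. Here I would invoke Euler's identity: since $f$ is homogeneous of degree $4$, $uf_u + vf_v + wf_w = 4f$, so the natural choice compatible with symmetry is
\[
F_{12} = -\tfrac{t}{4}\,(f_u, f_v, f_w)^t.
\]
This has bi-degree $(3,1)$ and vanishes at $t=0$, as required.

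For the first three rows of the syzygy equation I now need $F_{11}(u,v,w)^t = -t F_{12} = \tfrac{t^2}{4}(f_u,f_v,f_w)^t$, with $F_{11}$ symmetric, of bi-degree $(2,2)$, and vanishing at $t=0$. Each partial $f_i$ is homogeneous of degree $3$, so a second application of Euler's identity (now to the components of $\nabla f$) gives $\mathrm{Hess}(f)\cdot(u,v,w)^t = 3(f_u,f_v,f_w)^t$, where $\mathrm{Hess}(f)$ is the Hessian matrix of $f$. Hence I set
\[
F_{11} = \tfrac{t^2}{12}\,\mathrm{Hess}(f),
\]
which is automatically symmetric, of bi-degree $(2,2)$, and zero at $t=0$.

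There is really no hard step here: the construction is entirely parallel to Lemma \ref{lBo1}, with the role played there by the Koszul syzygy $ug_u + vg_v + wg_w = 0$ now replaced by Euler's identity for $f$, which carries the nontrivial right-hand side $tf$ that gets absorbed by pulling out a factor of $t$ at each stage. The only bookkeeping check is that all bi-degrees match, which they do by construction, and that the two syzygy equations are simultaneously satisfied, which follows from the identity $\mathrm{Hess}(f)\cdot(u,v,w)^t = 3\nabla f$ matching the factor $\tfrac{t^2}{12}\cdot 3 = \tfrac{t^2}{4}$.
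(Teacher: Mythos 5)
Your construction is correct and is exactly the matrix the paper writes down, namely $F = \begin{pmatrix} \tfrac{t^2}{12}H & -\tfrac{t}{4}J^t \\ -\tfrac{t}{4}J & f\end{pmatrix}$ obtained from the two Euler identities $J(u,v,w)^t = 4f$ and $H(u,v,w)^t = 3J^t$. The only difference is presentational: you derive the blocks one at a time from the syzygy equations rather than stating the matrix and verifying, but the argument is the same.
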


\begin{proof}
Consider the Jacobian matrix 
\[
	J = (f_u,f_v,f_w)
\]
and the Hessian matrix
\[
 	H = \begin{pmatrix}
		f_{uu} & f_{uv} & f_{uw} \\
		f_{uv} & f_{vv} & f_{vw} \\
		f_{uw} & f_{vw} & f_{ww}
	       \end{pmatrix}.
\]
In this situation we have
\[
	J (u,v,w)^t = 4 f
\]
and
\[
	H (u,v,w)^t = 3 J^t.
\]
We now consider the matrix
\[
	F = \begin{pmatrix}
		\frac{t^2}{12} H & \frac{-t}{4} J^t \\
		\frac{-t}{4} J & f
	      \end{pmatrix}.
\]
$F$ is symmetric with
\[
	F_{\{t=0\}} = \begin{pmatrix} 0 & 0 \\ 0  & f \end{pmatrix}.
\]
The above calculations show
\[
	F (u,v,w,t)^t = 0.
\]
\end{proof}

\medskip

\emph{Relation of the above construction of symmetric $\Phi_0$'s to the Prym curves of Artin-Mumford type} 

\begin{lemma}\xlabel{lRelationArtinMumfordM}
Performing an appropriate base change in $3\sO (-3) \oplus \sO(-5) \oplus \sO(-4)$ and symmetrically in $3\sO (-3) \oplus \sO(-1) \oplus \sO(-2)$ any matrix $M$ as in \ref{fM} can be brought into the ``Artin-Mumford" form
 \[
\begin{pmatrix}
a & b & 0 & 0 & 0\\
b & c & 0 & 0 & 0\\
0 & 0 & 1 & 0 & 0\\ 
0 & 0 & 0 & 1 & 0\\ 
0 & 0 & 0 & 0 & 1
\end{pmatrix},
\]
with 
\begin{align*}
a &= -u^2-v^2-w^2\\
b &= -uc_u-vc_v-wc_w\\
c &= -c_u^2-c_v^2-c_w^2 + f.
\end{align*}
Choosing $f, c_u, c_v, c_w$ in $M$ appropriately, we can, up to a projective transformation, obtain any smooth conic $a$ and cubic $b$, quartic $c$ in this way.
\end{lemma}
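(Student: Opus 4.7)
The strategy is an explicit symmetric Gaussian elimination on $M$, using the identity block in rows and columns $3$--$5$ as pivots to clear the remaining off-diagonal entries in rows and columns $1$ and $2$. Because the pivots are literal $1$'s, each reduction step is given by an elementary base change whose bi-grading is transparent, and the algebra is simple enough to carry out explicitly.

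The first step is the symmetric conjugation $M \mapsto P_1^{t} M P_1$ that subtracts $u$, $v$, $w$ times columns $3$, $4$, $5$ from column $1$ and, simultaneously, $u, v, w$ times rows $3, 4, 5$ from row $1$. A direct calculation shows that this clears the entries $u, v, w$ in row and column $1$ while leaving the lower-right identity block intact, and produces the entries $a = -u^2 - v^2 - w^2$ in position $(1,1)$ and $b = -uc_u - vc_v - wc_w$ in positions $(1,2)$ and $(2,1)$. The second step is entirely analogous: subtract $c_u, c_v, c_w$ times columns $3, 4, 5$ from column $2$ and symmetrically for rows. This clears $c_u, c_v, c_w$ in row and column $2$ and replaces the $(2,2)$ entry $f$ by $c = f - c_u^2 - c_v^2 - c_w^2$, giving the block diagonal form claimed.

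Next I would verify compatibility with the bi-grading. Tracking the degrees of the entries of $M$ against the degree matrix $a_j - b_i$ (with source shifts $-a_j$ and target shifts $-b_i$), one sees that rows and columns $3$--$5$ correspond to the three copies of $\sO(-3)$ in the target and source respectively, while row $1$ / column $1$ corresponds to the summand $\sO(-2)$ / $\sO(-4)$ and row $2$ / column $2$ to the summand $\sO(-1)$ / $\sO(-5)$. The coefficients used in the two base changes -- $u, v, w$ of degree $1 = 4-3$, and $c_u, c_v, c_w$ of degree $2 = 5-3$ -- then lie in exactly the correct graded pieces, so the base changes are bi-homogeneous and preserve the direct-sum decomposition.

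For the surjectivity assertion, I would first normalize the conic: over $\CC$ any smooth conic is projectively equivalent to $-u^2 - v^2 - w^2$, by diagonalizing the associated symmetric bilinear form and absorbing the nonzero eigenvalues into square roots of $-1$. With $a$ fixed in this normal form, any cubic $b$ admits a decomposition $b = -uc_u - vc_v - wc_w$ with quadratic $c_u, c_v, c_w$: Euler's identity $3b = u\partial_u b + v\partial_v b + w\partial_w b$ allows the choice $c_u := -\tfrac{1}{3}\partial_u b$ and analogously for $c_v, c_w$. Finally, any desired quartic $c$ is realized by setting $f := c + c_u^2 + c_v^2 + c_w^2$. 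The only mildly delicate point in the whole argument is the bi-grading bookkeeping, but as indicated it is forced by the specific shape of the Koszul setup; the rest reduces to elementary symmetric-matrix algebra.
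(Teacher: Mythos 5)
Your proposal is correct and matches the paper's argument: the paper's ``explicit computation'' is carried out in the accompanying Macaulay2 file via the single base-change matrix $B$ with rows $(1,0,-u,-v,-w)$ and $(0,1,-c_u,-c_v,-c_w)$ over the identity block, which is exactly the composite of your two symmetric elimination steps, and the surjectivity is likewise realized there by the Euler-identity choice $c_u=-\tfrac{1}{3}\partial_u b$, etc., followed by $f=c+c_u^2+c_v^2+c_w^2$. Nothing essential differs.
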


\begin{proof}
The first part is an explicit computation, the second part is obvious from the formulas for $a,b,c$.
\end{proof}

\begin{proposition}\xlabel{pIsomCoker}
Suppose that $M$ is as in \ref{fM} and $a=b=c=0$ with the notation in Lemma \ref{lRelationArtinMumfordM} has no solution; 
then the cokernels of $\Phi_0$ in diagram \ref{dPhizero} and $M$ in diagram \ref{dMNzero} are isomorphic; in particular, by Lemma \ref{lRelationArtinMumfordM}, we get any good sextic Prym curve of Artin-Mumford type via the construction above.
\end{proposition}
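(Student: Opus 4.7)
The plan is to compare the two cokernels by combining the reduction of $M$ to Artin--Mumford form (Lemma \ref{lRelationArtinMumfordM}) with an explicit identification of $\Phi_0$ coming from the defining diagrams \ref{dMNzero} and \ref{dPhizero}. By Lemma \ref{lRelationArtinMumfordM}, a symmetric change of basis brings $M$ to the block-diagonal shape $\mathrm{diag}(M'', I_3)$ with $M'' = \begin{pmatrix} a & b \\ b & c \end{pmatrix}$, so $\coker(M) \simeq \coker(M'')$ is a torsion-free rank-one sheaf on the sextic $\{ac - b^2 = 0\}$ (a Prym sheaf in the sense of Theorem \ref{tBeau2}), the non-vanishing hypothesis ensuring that this sextic has no embedded component and $\coker(M'')$ really is torsion-free of rank $1$.

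Next I would identify $\sE_0^\vee(-6)$ with the subsheaf $A^t(F_1) \subset G_1$; this identification follows from the exactness at $F_1$ of the top row of \ref{dMNzero}, which is itself a consequence of the Koszul identity $S\cdot(u,v,w)^t = 0$ together with $(u,v,w,0)\cdot A = 0$ computed row-wise. Dually, I identify $\sE_0$ with $G_2/\ker A$. In these terms, $\Phi_0$ is literally the composite
\[
  A^t(F_1) \;\hookrightarrow\; G_1 \;\xrightarrow{M}\; G_2 \;\twoheadrightarrow\; G_2/\ker A,
\]
and hence
\[
  \coker(\Phi_0) \;=\; G_2\big/\bigl(M(A^t(F_1)) + \ker A\bigr).
\]
The desired isomorphism $\coker(\Phi_0)\simeq\coker(M)$ is therefore equivalent to the equality of subsheaves $M(A^t(F_1)) + \ker A = M(G_1)$ inside $G_2$.

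To establish this equality I would analyze the induced map $\bar M$ on $\coker(A^t) = G_1/A^t(F_1)$. Using the twisted Euler sequence on $\PP^2$ one computes $\coker(A^t) \simeq \sO(-4) \oplus \sO(-2)$. For the $\sO(-4)$ summand, represented by the first basis vector of $G_1$, the column $M(e_1) = (0,0,u,v,w)^t$ lies in $\ker A$ thanks to $S(u,v,w)^t = 0$, so this summand contributes trivially. The main obstacle is the $\sO(-2)$ summand: a local calculation reduces the required membership $M(g)\in M(A^t(F_1))+\ker A$ to the solvability of a $2\times 2$ linear system whose matrix is $\begin{pmatrix} b & a \\ c & b \end{pmatrix}$, with determinant $b^2 - ac = -\det M''$. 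This is solvable off the sextic; what remains is to show that the residue along the sextic agrees with the Artin--Mumford contribution from $M''$, which is exactly where the hypothesis that $a,b,c$ share no common zero intervenes, guaranteeing that both cokernels are torsion-free of rank $1$ on the sextic and hence that their generic (rank-one) agreement extends globally.

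A clean way to finish is then to invoke the uniqueness of the rank-one torsion-free sheaf attached to a symmetric determinantal presentation, in the spirit of Theorem \ref{tBeau2}: both $\coker(\Phi_0)$ and $\coker(M'')$ are such sheaves on the same sextic, equipped with the canonical self-duality coming from the symmetry of $N_0$ respectively $M''$, so they must coincide. The final assertion of the proposition is then immediate from the explicit formulas $a = -u^2 - v^2 - w^2$, $b = -uc_u - vc_v - wc_w$, $c = f - c_u^2 - c_v^2 - c_w^2$ of Lemma \ref{lRelationArtinMumfordM}, which show that any prescribed smooth conic $a$, cubic $b$, and quartic $c$ is realized by a suitable choice of $f, c_u, c_v, c_w$, up to a projective transformation of the coordinates.
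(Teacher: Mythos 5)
Your setup is reasonable, but the argument has a genuine gap at the point where you try to close it. The claim that one can ``invoke the uniqueness of the rank-one torsion-free sheaf attached to a symmetric determinantal presentation'' is false: a plane sextic admits many inequivalent symmetric determinantal presentations, corresponding to the many two-torsion line bundles (equivalently, twisted theta-characteristics) it carries, and every one of the resulting cokernels is self-dual in the relevant sense. So knowing that $\coker(\Phi_0)$ and $\coker(M)$ are both symmetric rank-one sheaves supported on the same sextic $C$ does \emph{not} force them to be isomorphic; this is precisely the point that requires proof. The computational step you propose in the middle --- establishing the equality of subsheaves $M(A^t(F_1)) + \ker A = M(G_1)$ via a local $2\times 2$ system with determinant $b^2-ac$ --- is only carried out off the sextic, and you explicitly defer the behaviour along $C$ (``what remains is to show that the residue along the sextic agrees\dots''), which is exactly where the two sheaves could differ. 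A surjection or generic isomorphism between two line bundles on $C$ does not yet identify them unless you control degrees or an actual section.

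The paper closes this gap differently: after checking (by a rank-locus computation) that under the hypothesis the cokernels $\beta$ of $M$ and $\beta'$ of $\Phi_0$ are both line bundles on the \emph{same} sextic $C$, it produces canonical sections $s \in H^0(\beta(1))$ and $s' \in H^0(\beta'(1))$ by deleting one row of $M$ (respectively of $N_0$) and identifying the resulting cokernel with a structure sheaf $\sO_D$ (respectively $\sO_{D'}$). A direct calculation shows that both zero divisors $D$ and $D'$ are cut out by $a=b=0$, i.e.\ by $u^2+v^2+w^2 = c_uu+c_vv+c_ww = 0$; hence $D=D'$ and $\beta(1)\simeq\beta'(1)$. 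Some such explicit comparison of distinguished sections (or another device that pins down the isomorphism class, not merely the support and self-duality) is indispensable, and your proposal does not supply one. Your final paragraph on realizing any conic $a$, cubic $b$ and quartic $c$ matches the paper's Lemma on the Artin--Mumford normal form and is fine.
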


\begin{proof}
A computer algebra computation \cite{BB-M2-16} shows that  any $M$ as in \ref{fM} has rank $5$ generically, rank $4$ on a curve $C$ of degree $6$ (depending of course on the parameters $M$ depends on), and rank $3$ only for $a=b=c=0$.
Similarly, $N_0$ has rank $3$ generically, rank $2$ on the same curve $C$, and rank $1$ only if $a=b=c=0$. This shows that the cokernels of
$M$ and $\Phi_0$ are line bundles $\beta$ and $\beta'$ on the same sextic curve $C$ under the assumption that $a=b=c=0$ has no solution. It remains to check whether these line bundles are isomorphic. For this we compare the divisors of zeros of certain canonically given sections of $\beta(1)$ and $\beta'(1)$.

First consider the diagram:

%\newcommand{stackThree}[3]{#2}
%	\begin{matrix}{c}
%	#1 \\ 
%	\oplus \\
%	#2 \\
%	\oplus \\
%	#3
%	\end{matrix}
%	}

%\stackThree{A}{B}{C}
{\small
\xymatrix{
	&&
	0 \ar[d]
	\\
	&&
	\sO(-1) \ar@{=}[r] \ar[d]&
	\sO(-1)\ar[d]^s 
	\\
	0 \ar[r] &
	3\sO (-3) \oplus \sO(-5) \oplus  \sO(-4)   \ar[r]^-M \ar@{=}[d]& 
	3\sO (-3) \oplus \sO(-1) \oplus \sO(-2)    \ar[r]  \ar[d] &
	\beta \ar[d] \ar[r] &
	0
	\\
	&
	3\sO (-3) \oplus \sO(-5) \oplus \sO(-4)  \ar[r]^-{\overline{M}} & 
	3\sO (-3) \oplus \sO(-2)    \ar[r]  \ar[d] &
	\sO_D \ar[r] \ar[d]&
	0
	\\
	&& 0 & 0
	\\
	}
}
\noindent
where $\overline{M}$ is the matrix obtained by erasing the second row of $M$ and
$D$ is  the locus where $\overline{M}$ drops rank. By the diagram it is clear that $D$ is the divisor associated to the section $s$ in $H^0(\beta(1))$ on $C$.
A direct calculation shows that $D$ is defined by 
\[
	u^2+v^2+w^2=c_u u+c_v v+c_w w = 0 \quad \text{(i.e. $a=b=0$).}
\]
Similarly consider the diagram

\begin{center}
\xymatrix{
	&&
	0 \ar[d]
	\\
	&&
	\sO(-1) \ar@{=}[r] \ar[d]&
	\sO(-1)\ar[d]^{s'} 
	\\
	0 \ar[r] &
	\Omega(-2) \oplus \sO(-5)   \ar[r]^-{\Phi_0} \ar@{=}[d]& 
	\Omega(-1) \oplus \sO(-1)    \ar[r]  \ar[d] &
	\beta' \ar[d] \ar[r] &
	0
	\\
	&
	\Omega(-2) \oplus \sO(-5)    \ar[r]^-{\overline{\Phi_0}} & 
	\Omega(-1)  \ar[r]  \ar[d] &
	\sO_{D'} \ar[r] \ar[d]&
	0
	\\
	&& 0 & 0
	\\
	}
\end{center}
where $\overline{\Phi_0}$ is induced by the matrix $\overline{N_0}$ which is obtained by 
erasing the last row of $N_0$. A direct calculation shows that $D'$ is also defined by 
the equations above. 

This proves $D = D'$ and therefore $\beta(1) = \beta'(1)$
\end{proof}

\begin{proof}[Proof of Theorem \ref{tSexticDegeneration}]
Since (i) is a special case of (ii), which in turn is implied by (iii), it suffices to prove (iii) and (iv). For (iv) choose $M$ as \ref{fM} in such a way that we get a good sextic plane Prym curve of Artin-Mumford type (in particular, it splits as a union of two cubics tangent to a conic). Write down the deformation $\Phi$ of the corresponding $\Phi_0$ as constructed in the proof of Proposition \ref{pDeformationPhi} (using Lemmata \ref{lBo1} and \ref{lBo2}), and verify by Macaulay2 that the general fiber of the family of conic bundles defined by $\Phi$ is smooth \cite{BB-M2-16}. Then (iv) of Theorem \ref{tSexticDegeneration} holds by Proposition \ref{pIsomCoker}. Then also (iii) holds by Lemma \ref{lRelationArtinMumfordM}, Proposition \ref{pDeformationPhi} and Proposition \ref{pIsomCoker}. Note that generically, the conditions necessary for the validity of Proposition \ref{pIsomCoker} (that $a=b=c=0$ has no solutions) will continue to hold, and the general fiber of the resulting family of conic bundles associated to $\Phi$ will still be smooth since we verified this in a particular case above by explicit computation.
\end{proof}

\section{A lemma of Colliot-Th\'{e}l\`{e}ne and Totaro and the general case of Theorem \ref{tMain}} \xlabel{sGeneralTheorem}

Notice that in Sections \ref{sDetDeg} and \ref{sGoodDegPrym} we have proved in two different ways that a very general hypersurface $H_{2,2} \subset \PP^2_{(x:y:z)} \times \PP^2_{(u:v:w)}$ is not stably rational, and, in fact, we have proved a little more by \cite[Thm.  1.14]{CT-P16}: we know that such a $H_{2,2}$ does not have universally trivial Chow group of zero cycles. Recall: a smooth projective variety $X$ over $k=\CC$ ($k$ could be a different field in another set-up, though) has universally trivial Chow zero if for any field $L$ containing $k$ 
\[
\mathrm{CH}_0 (X_L) = \ZZ x_L
\]
where $x$ is a $k$-point of $X$; in other words, for any base change $X_L$ to an overfield $L \supset k$, the Chow group of zero cycles is reduced to $\ZZ$. In fact, as explained in \cite[\S 1.2]{A-CT-P}, $X$ has universally trivial Chow zero if and only if for $L=k(X)$, the diagonal point $\delta_L$ is rationally equivalent over $L$ to some constant point $x_L$ for $x\in X(k)$. So it suffices to check the condition for $L$ the function field of $X$. Having universally trivial Chow zero is also equivalent to having an integral Chow theoretic decomposition of the diagonal in the sense of Voisin \cite{Voi15}; i.e., one can write
\[
\Delta_X = Z_1 + Z_2 \; \mathrm{in} \; \mathrm{CH}^{\dim X} (X\times X) 
\] 
where $Z_2 =X \times \{ x\}$ for $x\in X(k)$ and $Z_1$ is supported on $D\times X$ for some proper closed algebraic subset $D\subsetneq X$.

\smallskip

One can inductively prove the $(2,n)$ case, $n\ge 2$, of Theorem \ref{tMain} starting from the $(2,2)$ case. This possibility as well as the proof was kindly communicated to us by Zhiyu Tian as it arises out of a method used by him, Zhi Jiang and Letao Zhang in forthcoming work and we give this proof here with their permission. We thank them very much for this. The main ingredient is a Lemma due to Colliot-Th\'{e}l\`{e}ne and Totaro \cite[Lemma 2.4]{To16} which says the following:

\begin{lemma}\xlabel{lCT-T}
Let $A$ be a discrete valuation ring with fraction field $K$ and algebraically closed residue field $k$. Let $\mathcal{X}$ be a flat proper scheme over $A$. Let $X$ be the general fiber $\mathcal{X}\times_A K$ and $Y$ the special fiber $\mathcal{X}\times_Ak$. Suppose that $X$ is geometrically integral and there is a proper birational morphism $X' \to X$ with $X'$ smooth over $K$. Suppose that there is an algebraically closed field $F$ containing $K$ such that $\mathrm{CH}_0$ of $X_F'$ is universally trivial. Then, for every extension field $l$ of $k$, every zero-cycle of degree zero in the smooth locus of $Y_l$ is zero in $\mathrm{CH}_0(Y_l)$.
\end{lemma}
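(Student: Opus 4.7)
The plan is to apply the standard specialization-of-the-diagonal argument in Chow theory (used throughout \cite{Voi15} and \cite{CT-P16}), but adapted to allow the singular special fiber $Y$ and only conclude triviality on its smooth locus. The three main steps are: descent of the diagonal decomposition from $F$ to $K$; construction of an auxiliary birational model $\mathcal{X}'$ over $A$ and application of Fulton's specialization map to produce a diagonal decomposition on the special fiber $Y'$ of $\mathcal{X}'$; transfer of the resulting triviality from $Y'$ back to the smooth locus of $Y_l$.

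First, universal triviality of $\mathrm{CH}_0(X'_F)$ with $F$ algebraically closed is equivalent to an integral diagonal decomposition
\[
\Delta_{X'_F} = X'_F \times \{x\} + Z, \qquad Z \text{ supported on } D \times X'_F,
\]
for a proper closed $D \subsetneq X'_F$. Both sides are defined over a finitely generated subextension of $F/K$; after replacing $A$ by a finite extension (still a DVR with algebraically closed residue field) the decomposition descends to $K$. Using Nagata compactification and blow-ups, I then build a proper flat $A$-model $\mathcal{X}'$ with generic fiber $X'$ and a proper birational $A$-morphism $\pi\colon \mathcal{X}' \to \mathcal{X}$ extending $X' \to X$ on generic fibers; by centering all blow-ups inside the singular locus of $\mathcal{X}$ I may assume $\pi$ is an isomorphism over the smooth locus $Y^{\mathrm{sm}}$ of $Y$, and by one further blow-up that $\mathcal{X}'$ is regular near the generic points of its special fiber $Y'$, so that Fulton's specialization map on Chow groups is defined in the appropriate degree.

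Applying specialization to the generic diagonal decomposition, and using that it commutes with products, sends fundamental classes to fundamental classes, and respects supports, I obtain a corresponding integral decomposition $\Delta_{Y'} = Y' \times \{y_0\} + Z'$ with $Z'$ supported on $D' \times Y'$ for a proper closed $D' \subsetneq Y'$. Given a degree-zero zero-cycle $\alpha$ on $Y^{\mathrm{sm}}_l$, I lift it via $\pi^{-1}$ (an isomorphism over $Y^{\mathrm{sm}}$) to a zero-cycle $\widetilde{\alpha}$ on $Y'_l$ and act on it by the correspondence $\Delta_{Y'_l}$: the identity above shows $\widetilde{\alpha}$ is rationally equivalent to $\deg(\alpha)\, y_0$ plus a cycle supported on $D'_l$, hence to a cycle supported on $D'_l$ since $\deg(\alpha) = 0$. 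A short induction on dimension applied to the components of $D'$ (iterating the same specialization argument to each) reduces the remaining cycle to zero, and pushing forward by $\pi$ and base-changing from $k$ to $l$ (flatness preserves rational equivalence throughout) gives $\alpha = 0$ in $\mathrm{CH}_0(Y_l)$.

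The main obstacle will be the geometric bookkeeping for $\mathcal{X}'$: arranging $\pi$ to be an isomorphism over $Y^{\mathrm{sm}}$ while keeping $\mathcal{X}'$ regular near the generic points of $Y'$ is precisely what lets zero-cycles on $Y^{\mathrm{sm}}_l$ be lifted to $Y'_l$, and it also explains why the conclusion of the lemma must be restricted to the smooth locus. A secondary subtlety is giving precise meaning to the correspondence action of $\Delta_{Y'}$ and $Z'$ on zero-cycles when $Y'$ is singular; this is well-defined for zero-cycles supported in the smooth locus via flat pullback on the first factor and proper pushforward on the second, which suffices for the argument above.
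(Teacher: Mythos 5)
You should first be aware that the paper does not prove Lemma \ref{lCT-T} at all: it is quoted verbatim from \cite[Lemma 2.4]{To16}, where it is attributed to Colliot-Th\'el\`ene, and the reader is referred to that source. Measured against that proof, your proposal contains a genuine gap at its load-bearing step, namely the construction of the model $\pi\colon \mathcal{X}'\to\mathcal{X}$ together with the claim that $\pi$ can be arranged to be an isomorphism over the smooth locus $Y^{\mathrm{sm}}$ of the special fiber. This is what your argument needs in order to lift a zero-cycle from $Y^{\mathrm{sm}}_l$ to $Y'_l$, and it is not justified and in general not achievable: the non-isomorphism locus of the \emph{given} $X'\to X$ is a closed subset of the generic fiber determined by $f$, not by you, and its closure in $\mathcal{X}$ meets the special fiber in a closed set over which $\pi$ cannot be an isomorphism; nothing forces that set to avoid $Y^{\mathrm{sm}}$ (for instance $X'\to X$ may blow up a smooth $K$-point of $X$ whose closure specializes into $Y^{\mathrm{sm}}$). ``Centering all blow-ups inside the singular locus of $\mathcal{X}$'' only controls blow-ups you add on top of a model of $X'$; it does not control the behaviour over $Y$ of the closure of the graph of $f$, which is already forced on you. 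A secondary soft spot is the ``short induction on dimension over the components of $D'$'': with $Z'$ supported on $D'\times Y'$, the correspondence kills any zero-cycle disjoint from $D'$ outright, since the supports of $\mathrm{pr}_1^*\alpha$ and $Z'$ are then disjoint, so no induction is needed; if the cycle meets $D'$ what is actually required is a moving argument inside the smooth locus, which an induction on components of $D'$ does not supply.

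The proof in \cite{To16} sidesteps both problems by reversing the order of operations: one first descends the decomposition of $\Delta_{X'}$ to a finite extension of $K$ (as you do), then pushes it forward along $f\times f$, using that $(f\times f)_*\Delta_{X'}=\Delta_X$ for $f$ proper birational and that the image of the support term is still a proper closed subset $D\subsetneq X$, so as to obtain an integral decomposition of $\Delta_X$ on the possibly singular generic fiber $X$ itself; only then does one apply Fulton's specialization map, on the given model $\mathcal{X}\times_A\mathcal{X}$. No model of $X'$ over $A$ is needed, the only specialization of points required is that of the single marked point $x$ (available because $\mathcal{X}$ is proper and $k$ is algebraically closed), and the restriction to zero-cycles supported in $Y^{\mathrm{sm}}_l$ enters exactly where you correctly sensed it must, namely in making the correspondence action of the specialized decomposition well defined. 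If you want to repair your write-up, replace your model construction by this push-forward-then-specialize argument.
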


This can thus be viewed as in extension of the degeneration method in \cite[Thm.  1.14]{CT-P16} to the case where the central fiber may be reducible: in the form made precise in Lemma \ref{lCT-T}, the triviality of Chow zero is preserved also in this set-up.

Now we can apply this in our set-up as follows: suppose, inductively, that we have already proven that a very general hypersurface $H_{2,n}$ of bidegree $(2,n)$ does not have universally trivial Chow zero, the case $n=2$ being settled. We want to prove the assertion for $n+1$. Now arguing by contradiction, assume a very general $H_{2,n+1}$ had universally trivial Chow zero. Then we could find a family  $\mathcal{X}$ as in Lemma \ref{lCT-T} with $X$ a smooth hypersurface of bidegree $(2, n+1)$ and $Y$ the union $H \cup Z$ where $Z$ is a hypersurface of bidegree $(2,n)$, still general in the sense that it has Chow zero universally nontrivial, and $H$ is of bidegree $(0,1)$, i.e., is of the form $\PP^2_{(x:y:z)}\times h$ for a line $h$ in $\PP^2_{(u:v:w)}$. Now we can use an argument in \cite{To16} (after Lemma 2.4) to get a contradiction: to conclude the proof, by Lemma \ref{lCT-T}, it suffices to find a zero cycle of degree $0$ on
\[
Z_{k(Z)}- (Z\cap H)_{k(Z)}
\]
that is not zero in $\mathrm{CH}_0 ((Z \cup H)_{k(Z)})$. This follows if we can show that
\begin{gather}\label{fChow}
\mathrm{CH}_0 ((Z\cap H)_{k(Z)}) \to \mathrm{CH}_0 (Z_{k(Z)})
\end{gather}
is not surjective (use the Mayer-Vietoris sequence for Chow groups to see this). Now the left hand side of \ref{fChow} is just $\ZZ$ since $Z\cap H$ is a conic bundle over the line $h$ in $\PP^2_{(u:v:w)}$, hence a rational surface, and rational varieties have universally trivial Chow zero. But the right hand side of \ref{fChow} is precisely not equal to $\ZZ$ (there is some nontrivial torsion group) since $Z$ does not have universally trivial Chow zero by the inductive assumption. This concludes the proof.

\end{document}